\newtheorem{teo}{Theorem}[section]
\newtheorem{remark}[teo]{Remark}
\newtheorem{cor}[teo]{Corollary}
\newtheorem{lem}[teo]{Lemma}
\newtheorem{ex}[teo]{Example}
\providecommand{\F}{\ensuremath{\mathbb{F}_{q}} }
\providecommand{\gf}[1]{\ensuremath{\mathbb{F}_{#1}}}
\title{On easily computable indecomposable dimension group algebras, and group codes}
\author{ E. J. Garc\'ia-Claro\\ 
\small{Departamento de Matem\'aticas, Universidad Aut\'onoma Metropolitana-Iztapalapa}\\
\small{ C\'odigo postal 09340, Ciudad de M\'exico, M\'exico}\\
\small{eliasjaviergarcia@gmail.com} }
\begin{document}

\maketitle

\begin{abstract}
An easily computable dimension (or ECD) group code in the group algebra $\mathbb{F}_{q}G$ is an ideal of dimension less than or equal to $p=char(\mathbb{F}_{q})$ that is generated by an idempotent. This paper introduces an \textit{easily computable indecomposable dimension} (or ECID) group algebra as a finite group algebra for which all group codes generated by primitive idempotents are ECD. Several characterizations are given for these algebras. In addition, some arithmetic conditions to determine whether a group algebra is ECID are presented, in the case it is semisimple. In the non-semisimple case, these algebras have finite representation type where the Sylow $p$-subgroups of the underlying group are simple. The dimension and some lower bounds for the minimum Hamming distance of group codes in these algebras are given together with some arithmetical tests of primitivity of idempotents. Examples illustrating the main results are presented.
\end{abstract}

\textit{keywords}: group algebra; group code; $q$-orbit; $q$-cyclotomic class;  finite representation type group algebra;  ECD group code.

\textit{Mathematics Subject Classification}: 20C05, 11T71, 94B60.
\section{Introduction}

Let $\gf{q}$ be the finite field with $q$ elements and $p=char(\F)$. Given $v\in \gf{q}^{n}$, its \textit{Hamming weight} $wt(v)$ is the number of non-zero entries in $v$. The \textit{Hamming distance} in $\gf{q}^{n}$ is defined as $d(v,w)=wt(v-w)$ for all $v$ and $w$. An $[n,k,d]$- \textit{linear code} $C$ over $\gf{q}$ is an $k$-dimensional subspace of $\gf{q}^{n}$ with \textit{minimum distance} (or \textit{minimum weight}) $d(C):=\min \{d(v,w): v,w\in C \wedge v\neq w\}=\min \{wt(c): c\in C-\{0\}\}$. Constructing linear codes with large \textit{information rate} $k/n$, while preserving the \textit{relative minimum distance} $d/n$ as large as possible, is one of the major challenges in coding theory as well as computing or offering bounds for the parameters of a given code by using methods that reduce the complexity of those already known. \\

Determining the dimension and minimum distance of a linear code is relevant to know how good or bad this is for error correction. For this purpose, it is common to study linear codes with a richer algebraic structure (rather than just vector spaces), such as those that are ideals of a finite-dimensional algebra. The group algebra $\F G$ of a finite group $G$ over the field $\F$ is the vector space generated by $G$ with coefficients in $\F$, i.e., $\F G:=\left\lbrace \sum_{g\in G} \lambda_{g} g \, : \, \lambda_{g}\in \F \right\rbrace$ \cite{grouprings}. With the usual sum of vectors and the multiplication given by extending  the operation of $G$, $\F G$ is an $\F$-algebra with basis $G$. A \textit{group code} $C$ in $\F G$ (or $G$-code) is a left (right) ideal of $\F G$, also named abelian code when $G$ is an abelian group. For $v\in \F G$, its \textit{Hamming weight} $wt(v)$ is the number of non-zero entries in the coordinate vector of $v$ with respect to $G$. The \textit{Hamming distance} in $\F G$ is defined as $d(v,w)=wt(v-w)$ for all $v$ and $w$. Similarly to linear codes, for a group code $C$, its \textit{minimum distance} (or \textit{minimum weight}) is defined as $d(C):=\min \{d(v,w): v,w\in C \wedge v\neq w\}$. Group codes are of interest for several reasons among which are that these are asymptotically good \cite{asym-good1, asym-good2}, that outstanding classic codes such as the extended binary Golay \cite{golay} code or the binary Reed-Muller codes \cite{reed-muller} are of this kind, and that the parameters and properties of these codes can be studied using tools of representation theory \cite{on-ckcodes, LCD1, LCD2, borello2, garcia-tapia}. A family of group codes, for which the dimension can be computed in a simple manner, is the one of \textit{easily computable dimension} (or ECD) \textit{group code}. $C \subseteq \F G$ is an ECD group code if it is generated by an idempotent such that $dim_{\F}(C)\leq p$ (see \cite[pg 13]{garcia-tapia}). The dimension of an ECD group code can be computed as follows: if $C=\F Ge$ where $e$ is an idempotent in $\F G$, $dim_{\F}(C)\leq p$, $\lambda_{1}(e)$ denotes the coefficient of $e$ in $1$, and $r$ is the least non-negative integer in the class $|G|\lambda_{1}(e)\in \gf{p}$ (this is a class modulo $p$ by \cite[Lemma 3.1]{garcia-tapia}),  by \cite[Theorem 3.1, part 3]{garcia-tapia},
\[ dim_{\F}(C) = \begin{cases} r  &\mbox{if } r\neq 0 \\
p &\mbox{if }  r=0  \end{cases}.\] 
%, are closely related to the MDS-conjecture. The MDS conjecture \cite[pg 265]{cod2} states that if there is a non-trivial $[n, k]$ MDS code in $\gf{q}^{n}$, then     \[n \leq \begin{cases} q + 2 &\mbox{if } q \text{\ even,                                               and } k=3 \mbox{ or } k=q-1\\
%q+1 & \mbox{otherwise}  \end{cases}.\]
% In \cite[Theorem 5.1, part 1]{garcia-tapia} it was shown that for the family
% of ECD group codes, the MDS-conjecture holds true, i.e., if the family of ECD group codes in $\F G$ that are non-trivial MDS is not empty, then $|G|\leq q+1$ (as in \cite[Example 5.1]{garcia-tapia})
Some works computing or presenting bounds for the parameters of groups codes of the late years are the following: in \cite{bch-di}, M. Borello et al. introduced lower bounds on the dimension and minimum weight of group codes that are principal ideals in dihedral group algebras and determined a BCH-type bound for these. In \cite{elia-gorla}, M. Elia et al. addressed the problem of determining bounds for the dimension of group codes that are principal ideals by studying the characteristic polynomial of the right/left regular representation of a generator. In \cite{garcia-tapia}, we proved several relations for the dimension of principal ideals in group algebras by examining minimal polynomials of regular representations, which allowed us to provide an improvement of a lower bound for the dimension of group codes given in \cite{elia-gorla}, among other results. Recently, in \cite{borello2} M. Borello et al. studied a new way of constructing group codes called the Schur product, and diverse properties of group codes, among which is, the following lower bound for the minimum Hamming distance of an arbitrary group code: \cite[Corollary 2.6]{borello2} if $C\subseteq \F G$ is a group code, then \[\frac{|G|}{dim_{\F}(C)}\leq d(C).\]
In this work, necessary and sufficient conditions under which all the group codes generated by primitive idempotents in a group algebra are ECD are studied. Some of these conditions are later applied to compute the dimension of group codes in these algebras and to give lower bounds for their minimum Hamming distances (via \cite[Corollary 2.6]{borello2}). The manuscript is organized as follows. In Section \ref{s2}, some properties of the size of certain orbits (known as $q$-orbits) of a finite abelian group $G$, are studied. Later, in Section \ref{s3}, splitting fields for finite semisimple commutative group algebras are characterized. In Section \ref{s4}  characterizations of ECID group algebras are given (by applying the results of Sections \ref{s2} and \ref{s3}), and these results are later applied in Section \ref{s5} to compute the dimension and to give some lower bounds for the minimum Hamming distance of group codes in these algebras. In addition,  in Section \ref{s5}, arithmetical conditions (in terms of the minimum Hamming distance) are given for determining whether an idempotent in a finite group algebra is not primitive.  Examples illustrating the main results are included.

\section{Some properties of the size of a $q$-orbit}\label{s2}

In this work, it will be used the term ``group code'' instead of ``ideal'' (to refer to an ideal of finite group algebra) mostly in a context where the Hamming distance is involved. On what remains, $\F$ is the finite field with $q$ elements, $1\neq G$ is a finite abelian group such that $p=char(\F)\nmid |G|$, unless stated otherwise. If $\zeta$ is a primitive $exp(G)$-th root of unity in some extension field of $\F$, then  $\mathcal{G}:=Gal(\F(\zeta)/\F)=\langle \alpha \rangle$ (where $\alpha$ denotes the Frobenius automorphism) acts on $G$ as $\alpha^{j} \cdot g:=g^{q^{j}}$ for $j=0,...,|\mathcal{G}|-1$ and all $g\in G$. The orbits under this action are called the $q$-orbits (also known as $q$-cyclotomic classes \cite{PIGA}) of $G$. This section is dedicated to studying some properties of the $q$-orbits that will be necessary later for characterizing the ECID group algebras.\\  %IMPORTANT COMMENT it is necessary for F to be a splitting field for this action to coincide with that of automorphism $( )^q$ in $G$. In fact, if F is not splitting field, then the "action" is not an action because $\alpha^{t}=id$ (because o(alpha)=t=|\mathcal{G}|), but there exists $g\in G$ such that $\alpha^{t}\cdot g=id\cdot g=g^{q^{t}}\neq g$, i.e., the identity of $\mathcal{G}$ does not behave as in the definition of an action.
 
 From now on, $\mathcal{S}_{g}$ will denote the $q$-orbit of $g$ and $t_{g}:=\min \{a\in \mathbb{Z}_{>0}: g^{q^{a}}=g\}$ for all $g\in G$, and $l:=lcm\{t_{g}\}_{g\in G}$. Note that 
  \[ \mathcal{S}_{g}=\{\alpha^{j}\cdot g: j=0,...,|\mathcal{G}|-1\}=\{ g^{q^{j}}: j=0,...,|\mathcal{G}|-1\}=\{ g^{q^{j}}: j=1,...,t_{g}\}.
 \]
Thus $t_{g}=\min \{a\in \mathbb{Z}_{>0}: g^{q^{a}}=g\}=\min \{a\in \mathbb{Z}_{>0}: q^{a}\equiv 1\; mod(o(g))\}=|{S}_{g}|$ for all $g\in G$. The following Lemma presents some basic properties related to this number that will be of use later.
 
\begin{lem}\label{elemental}

The following statements hold:

\begin{enumerate}

\item Let $a\in \mathbb{Z}_{>0}$ and $g\in G$. Then $g^{q^{a}}=g$ $\Leftrightarrow$ $t_{g}\mid a$.
 
\item Let $a\in \mathbb{Z}_{>0}$. Then $g^{q^{a}}=g$ for all $g\in G$ $\Leftrightarrow$ $l\mid a$.

 %\item Let $\{g_{i}\}_{i=1}^{n}\subset G$, then $t_{g_{1}\cdots g_{n}}\mid lcm\{t_{g_{i}}\}_{i=1}^{n}$. In addition, if $gcd(o(g_{i}), o(g_{j}))=1$ for $i\neq j$, then $t_{g_{1}\cdots g_{n}}=lcm\{t_{g_{i}}\}_{i=1}^{n}$. 
 
\item Let $g,h\in G$ such that $o(g)\mid o(h)$, then $t_{g}\mid t_{h}$.
\end{enumerate}
\end{lem}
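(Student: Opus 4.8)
The plan is to convert every statement about the group-element identity $g^{q^{a}}=g$ into an equivalent statement about the integer $o(g)$ (the order of $g$), and then to invoke elementary facts about the multiplicative order of $q$ modulo $o(g)$. The key preliminary observation is the chain of equivalences $g^{q^{a}}=g \Leftrightarrow g^{q^{a}-1}=1 \Leftrightarrow o(g)\mid (q^{a}-1) \Leftrightarrow q^{a}\equiv 1 \pmod{o(g)}$. Since by the standing hypothesis $p=char(\mathbb{F}_{q})$ does not divide $|G|$, and $o(g)\mid |G|$ by Lagrange's theorem, the integer $q$ (a power of $p$) is coprime to $o(g)$, hence a unit modulo $o(g)$. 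Consequently $t_{g}=\min\{a\in\mathbb{Z}_{>0}: q^{a}\equiv 1 \pmod{o(g)}\}$ is precisely the multiplicative order $\ord_{o(g)}(q)$ of $q$ in the unit group $(\mathbb{Z}/o(g)\mathbb{Z})^{\times}$.

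For Part 1, this translation reduces the claim $g^{q^{a}}=g \Leftrightarrow t_{g}\mid a$ to the standard statement $q^{a}\equiv 1 \pmod{o(g)} \Leftrightarrow \ord_{o(g)}(q)\mid a$. I would prove the forward direction by the division algorithm: writing $a=t_{g}k+r$ with $0\leq r<t_{g}$, one has $q^{a}=(q^{t_{g}})^{k}q^{r}\equiv q^{r}\pmod{o(g)}$, so $q^{a}\equiv 1$ forces $q^{r}\equiv 1 \pmod{o(g)}$; since $0\leq r<t_{g}$, minimality of $t_{g}$ rules out $r>0$, giving $r=0$ and $t_{g}\mid a$. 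The converse is immediate, since $t_{g}\mid a$ yields $q^{a}=(q^{t_{g}})^{k}\equiv 1 \pmod{o(g)}$.

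Part 2 follows directly from Part 1 together with the defining property of the least common multiple. By Part 1, the condition $g^{q^{a}}=g$ holds for all $g\in G$ exactly when $t_{g}\mid a$ for every $g$, that is, when $a$ is a common multiple of the finite family $\{t_{g}\}_{g\in G}$; and an integer is a common multiple of a finite family of positive integers if and only if it is divisible by their least common multiple $l$. For Part 3, assuming $o(g)\mid o(h)$, I would argue by transitivity of divisibility: the definition of $t_{h}$ gives $o(h)\mid (q^{t_{h}}-1)$, which combined with $o(g)\mid o(h)$ yields $o(g)\mid (q^{t_{h}}-1)$, i.e. $g^{q^{t_{h}}}=g$; applying Part 1 with $a=t_{h}$ then gives $t_{g}\mid t_{h}$.

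I do not anticipate a serious obstacle, as the content is elementary once the right language is fixed. The only point that genuinely requires care is the initial translation and the justification that $q$ is a unit modulo $o(g)$ (so that $t_{g}$ is an honest multiplicative order and the order-divisibility theorem applies), which rests entirely on the hypothesis $p\nmid |G|$. With that in hand, Parts 2 and 3 are formal consequences of Part 1.
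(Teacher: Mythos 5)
Your proposal is correct and follows essentially the same route as the paper: the paper also uses the division algorithm plus minimality of $t_{g}$ for Part 1, the least-common-multiple property for Part 2, and the transfer of the congruence $q^{t_{h}}\equiv 1 \pmod{o(h)}$ to the modulus $o(g)$ for Part 3. The only cosmetic difference is that you phrase everything in terms of congruences and the multiplicative order $\ord_{o(g)}(q)$ from the outset (a translation the paper itself records just before the lemma), whereas the paper's proof of Part 1 manipulates the group-element identity $g^{q^{a}}=g$ directly.
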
 

\begin{proof}

\begin{enumerate}

\item Suppose that there exists $g_{0}\in G$ such that $g_{0}^{q^{a}}=g_{0}$ and $t_{g_{0}}\nmid a$, then $a=t_{g_{0}}c + r$ with $0\lneq r< t_{g_{0}}$. Thus, since $g_{0}^{q^{t_{g_{0}}}}=g_{0}$, then 
\[g_{0}=g_{0}^{q^{a}}= g_{0}^{q^{t_{g_{0}}c + r}}=g_{0}^{q^{t_{g_{0}} c}q^{r}}=(\underbrace{(\cdots(g_{0}^{q^{t_{g_{0}} }})^{q^{t_{g_{0}} }}\cdots)^{q^{t_{g_{0}}}}}_{c\ times})^{q^{r}}=g_{0}^{q^{r}},\]
which contradicts the minimality of $t_{g_{0}}$. Therefore, if $g^{q^{a}}=g$ then $t_{g}\mid a$. Conversely, if $t_{g}\mid a$, $a=t_{g}c$. Hence, since $g^{q^{t_{g}}}=g$, 
\[g^{q^{a}}= g^{q^{t_{g}c}}=\underbrace{(\cdots(g^{q^{t_{g} }})^{q^{t_{g} }}\cdots)^{q^{t_{g}}}}_{c\ times}=g.\]
\item Suppose that $g^{q^{a}}=g$ for all $g\in G$, then  $t_{g}\mid a$ for all $g\in G$ (by part $1$). Thus $l=lcm\{t_{g}\}_{g\in G}\mid a$. Conversely, suppose $l\mid a$. Since $t_{g}\mid l$ for all $g\in G$, then $t_{g}\mid a$ for all $g\in G$, i.e, $g^{q^{a}}=g$ for all $g\in G$ (by part $1$).  

%\item By induction on $n$. If $n=2$ the statement is true. In fact,  if $w= lcm(t_{g_{1}}, t_{g_{2}})$, then $g_{1}^{q^{w}}=g_{1}$ and $g_{2}^{q^{w}}=g_{2}$ (by part $1$ and the fact that $t_{g_{1}}\mid w$ and $t_{g_{2}}\mid w$). Thus $(g_{1}g_{2})^{q^{w}}=g_{1}^{q^{w}}g_{2}^{q^{w}}=g_{1}g_{2}$ and so $t_{g_{1}g_{2}}\mid w$ (by part $1$). Suppose now $gcd(o(g_{1}),o(g_{2}))=1$, then $o(g_{1}g_{2})=o(g_{1})o(g_{2})$ so that $t_{g_{1}g_{2}}=\min \{a\in \mathbb{Z}_{>0}: q^{a}\equiv 1\; mod(o(g_{1})o(g_{2}))\}$. Hence $q^{t_{g_{1}g_{2}}}=1+c(o(g_{1})o(g_{2}))$ for some $c\in \mathbb{Z}$, implying that $g_{1}^{q^{t_{g_{1}g_{2}}}}=g_{1}$ and $g_{2}^{q^{t_{g_{1}g_{2}}}}=g_{2}$. So  $t_{g_{1}}\mid t_{g_{1}g_{2}} $ and $t_{g_{2}}\mid t_{g_{1}g_{2}} $ (by part $1$), i.e., $w\mid t_{g_{1}g_{2}}$. Therefore, $w=t_{g_{1}g_{2}}$. Suppose that the statement is true for $n=k$. Thus, if $n=k+1$, then $lcm\{t_{g_{i}}\}_{i=1}^{k+1}=lcm(lcm\{t_{g_{i}}\}_{i=1}^{k}, t_{g_{k+1}})$. %(by \cite[page $58$]{Pettofrezzo}).
 %Hence, by induction hypothesis, $lcm\{t_{g_{i}}\}_{i=1}^{k+1}=lcm(t_{g_{1}\cdots g_{k}}, t_{g_{k+1}})$. Finally, since $gcd(o(g_{1}\cdots g_{k}), o(g_{k+1}))=1$ (because $gcd(o(g_{i}), o(g_{j}))=1$ for $i\neq j$), then $lcm\{t_{g_{i}}\}_{i=1}^{k+1}=t_{g_{1}\cdots g_{k+1}}$ (by Lemma \ref{elemental}, part $3$).

\item  Since $q^{t_{h}}\equiv 1\; mod(o(h))$, there exists $k\in \mathbb{Z}$ such that $q^{t_{h}}= 1+ k\cdot o(h)= 1+ k\cdot( c\cdot o(g))$ for some $c\in \mathbb{Z}$ (because $o(g)\mid o(h)$). Thus $q^{t_{h}}\equiv 1\; mod(o(g))$, implying that $g^{q^{t_{h}}}=g$ and so $t_{g}\mid t_{h}$ (by part $1$). 
\end{enumerate}
\end{proof}

 \section{Splitting fields for finite abelian groups}\label{s3}

In this section, some characterizations for the splitting fields of $\F G$ containing $\F$ are presented. The main result,  Theorem \ref{eq-split}, will be used later in the study of ECID group algebras. The ideals (modules) considered in this work will be left ideals (modules) unless stated otherwise. A simple (irreducible) module over a ring is a module with no submodules other than the zero and itself. A field  $F$ is a splitting field for $FG$ (or $G$) if $End_{FG}(S)=F$ for every simple $FG$-module $S$ \cite[p. 22]{larry}.\\ % The main result is Theorem \ref{super}, which states that the largest of the minimal ideals (abelian codes) in $\F G$  has dimension equal to the size of the $q$-orbit of a special kind of elements of $G$.\\\\ 
It is well-known that in every finite abelian group $G$ there exists an element $w \in G$ such that $o(w)=exp(G)$.

\begin{lem}\label{super0}  Let $w\in G$ be such that $o(w)=exp(G)$. Then $t_{w}=l$.
\end{lem}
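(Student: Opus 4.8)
The plan is to prove the two divisibilities $t_{w}\mid l$ and $l\mid t_{w}$ separately, and then conclude equality. Both will follow quickly from the material already set up, so rather than any delicate estimate the work is entirely bookkeeping with divisibility.

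The first divisibility is immediate from the definition of $l$. Since $l=lcm\{t_{g}\}_{g\in G}$ is, by construction, a common multiple of the whole family $\{t_{g}\}_{g\in G}$, in particular it is a multiple of the single term $t_{w}$; that is, $t_{w}\mid l$. No hypothesis on $w$ is needed here.

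For the reverse divisibility $l\mid t_{w}$, the key point is precisely that $w$ has maximal order. Because $o(w)=exp(G)$, every element $g\in G$ satisfies $o(g)\mid exp(G)=o(w)$, using the standard fact that the order of any element of a finite abelian group divides the exponent. I would then invoke Lemma \ref{elemental}, part $3$, with this pair: from $o(g)\mid o(w)$ it follows that $t_{g}\mid t_{w}$. As $g$ was arbitrary, $t_{w}$ is a common multiple of the entire family $\{t_{g}\}_{g\in G}$, and hence it is divisible by their least common multiple, giving $l\mid t_{w}$. (Alternatively one could route this through part $2$ of the same lemma, observing that $g^{q^{t_{w}}}=g$ for all $g$ and deducing $l\mid t_{w}$, but the route through part $3$ is the most direct.)

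Combining $t_{w}\mid l$ with $l\mid t_{w}$ yields $t_{w}=l$, as claimed. I do not anticipate any genuine obstacle: the only external ingredient is the elementary group-theoretic fact that every order divides the exponent, and once that is recorded the statement reduces to a single application of part $3$ of the preceding lemma together with the definition of the least common multiple.
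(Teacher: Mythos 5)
Your proof is correct and follows essentially the same route as the paper's: both directions of divisibility, with $l\mid t_{w}$ obtained from $o(g)\mid exp(G)=o(w)$ and Lemma \ref{elemental} (part $3$), and $t_{w}\mid l$ immediate from the definition of the least common multiple. No gaps; nothing further is needed.
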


\begin{proof}
 Since $o(h)\mid lcm\{o(g)\}_{g\in G}=exp(G)=o(w)$ for all $h\in G$, then $t_{h}\mid t_{w}$ (by Lemma \ref{elemental}, part $3$) for all $h\in G$, implying that $l=lcm\{t_{h}\}_{h\in G}\mid t_{w}$. On the other hand, $t_{w}\mid lcm\{t_{h}\}_{h\in G}$ so that $l=t_{w}$.
\end{proof}

\begin{teo}\label{eq-split} Let $w \in G$ be such that $o(w)=exp(G)$ and $F$ be an extension field of $\F$ of index $t$. Then the following statements are equivalent:

\begin{enumerate}

\item $F$ is a splitting field for $G$.

\item $F$ contains a primitive $exp(G)$-th root of unity.

\item $t_{w}\mid t$.

\item $exp(G)\mid q^{t}-1$.  
\end{enumerate}
\end{teo}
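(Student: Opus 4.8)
The plan is to treat the four statements as a single web of equivalences, isolating the two purely arithmetic equivalences $(3)\Leftrightarrow(4)$ and $(2)\Leftrightarrow(4)$, which should follow quickly, and then concentrating on the representation-theoretic equivalence $(1)\Leftrightarrow(2)$, which I expect to be the heart of the argument.

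First I would dispatch $(3)\Leftrightarrow(4)$. By the identity $t_{g}=\min\{a\in\mathbb{Z}_{>0}:q^{a}\equiv 1\; mod(o(g))\}$ noted before Lemma \ref{elemental}, the number $t_{w}$ is the multiplicative order of $q$ modulo $o(w)=exp(G)$. Applying Lemma \ref{elemental}, part $1$, to the element $w$ then gives $t_{w}\mid t\Leftrightarrow w^{q^{t}}=w\Leftrightarrow q^{t}\equiv 1\; mod(exp(G))\Leftrightarrow exp(G)\mid q^{t}-1$, which is exactly $(3)\Leftrightarrow(4)$. For $(2)\Leftrightarrow(4)$ I would use that $F=\mathbb{F}_{q^{t}}$, so $F^{\times}$ is cyclic of order $q^{t}-1$; since $p\nmid|G|$ forces $p\nmid exp(G)$, a cyclic group of order $q^{t}-1$ contains an element of order $exp(G)$ (equivalently, a primitive $exp(G)$-th root of unity) if and only if $exp(G)\mid q^{t}-1$.

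The main step is $(1)\Leftrightarrow(2)$. Since $p\nmid|G|$, Maschke's theorem makes $FG$ semisimple, and as $G$ is abelian $FG$ is a commutative semisimple algebra, hence a finite product $\prod_{i}F_{i}$ of field extensions $F_{i}/F$; the simple $FG$-modules are exactly these $F_{i}$, with $End_{FG}(F_{i})\cong F_{i}$. Thus $F$ is a splitting field if and only if every $F_{i}=F$, i.e.\ $FG\cong F^{\,|G|}$, which happens precisely when $G$ admits $|G|$ distinct characters $G\to F^{\times}$. For $(2)\Rightarrow(1)$ I would note that any character of $G$ takes values among $exp(G)$-th roots of unity, so if $F$ contains a primitive $exp(G)$-th root of unity then all $|G|$ characters of $G$ are $F$-valued and $FG$ splits completely. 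For the converse $(1)\Rightarrow(2)$, having $|G|$ many $F$-valued characters, I would restrict them to the cyclic subgroup $\langle w\rangle$; since restriction of characters is surjective onto the character group of $\langle w\rangle$ for finite abelian groups, some $F$-valued character sends $w$ to a primitive $o(w)=exp(G)$-th root of unity, forcing that root to lie in $F$, which is $(2)$.

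I expect the genuine obstacle to be $(1)\Leftrightarrow(2)$: pinning down that the Wedderburn components of the commutative semisimple algebra $FG$ are fields whose endomorphism algebras recover them, and that the splitting condition is equivalent to the complete splitting $FG\cong F^{\,|G|}$. An appealing alternative, more in the spirit of Sections \ref{s2}--\ref{s3}, is to route $(1)$ through the $q^{t}$-orbits of $G$: the simple $FG$-modules correspond to these orbits, with $F$-dimension equal to the orbit size, so $F$ splits $G$ if and only if every $q^{t}$-orbit is a singleton, i.e.\ $g^{q^{t}}=g$ for all $g\in G$. By Lemma \ref{elemental}, part $2$, the latter is equivalent to $l\mid t$, and by Lemma \ref{super0} to $t_{w}\mid t$, which is $(3)$; this gives $(1)\Leftrightarrow(3)$ directly and closes the cycle with the arithmetic equivalences already established.
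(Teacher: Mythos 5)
Your proposal is correct, but it takes a genuinely different route from the paper. The paper proves the cycle $1\Rightarrow 2\Rightarrow 3\Rightarrow 4\Rightarrow 1$ and leans on representation-theoretic machinery at three of the four steps: for $1\Rightarrow 2$ it decomposes $G$ into its Sylow subgroups, lets an element $g_{i}$ of maximal order in each Sylow subgroup act as a scalar $\zeta_{i}$ on a minimal ideal, and multiplies these scalars to produce a primitive $exp(G)$-th root of unity; for $2\Rightarrow 3$ it counts simple modules (the $|G|$ conjugacy classes, via \cite[Corollaries 24.11 and 4.4]{larry}), concludes the minimal ideals are one-dimensional, and translates this into $q^{t}$-orbits of size one via \cite[Theorem 4.3, part 2]{garcia-tapia} together with Lemmas \ref{super0} and \ref{elemental}; and $4\Rightarrow 1$ runs the same orbit--dimension dictionary backwards. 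You instead use statement $4$ as a hub of biconditionals: your $(3)\Leftrightarrow(4)$ coincides with the paper's arithmetic step, but your $(2)\Leftrightarrow(4)$ is purely arithmetic (cyclicity of $F^{\times}=\mathbb{F}_{q^{t}}^{\times}$), replacing the paper's count of simple modules, and your $(1)\Leftrightarrow(2)$ is a self-contained character argument: splitting is equivalent to $FG\cong F^{|G|}$, i.e., to all characters of $G$ being $F$-valued; then $(2)\Rightarrow(1)$ holds because character values are $exp(G)$-th roots of unity, and $(1)\Rightarrow(2)$ by extending to $G$ a character of $\langle w\rangle$ that sends $w$ to a primitive $exp(G)$-th root of unity. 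The one point you should make explicit in that last step is that $p\nmid|G|$ gives $G$ exactly $|G|$ characters over an algebraic closure, so the $|G|$ distinct $F$-valued characters coming from $FG\cong F^{|G|}$ exhaust all of them, forcing the extended character to be $F$-valued. What your route buys is economy: it needs only standard duality of finite abelian groups and cyclicity of finite multiplicative groups, with no appeal to the orbit--dimension theorem or the Sylow-scalar construction. What the paper's route buys is coherence with the rest of the article: the orbit--dimension dictionary it exercises here is exactly the tool reused in Theorems \ref{super} and \ref{f1}, and indeed your own closing alternative (every $q^{t}$-orbit a singleton iff $F$ splits $G$) is precisely the paper's $2\Rightarrow 3$ and $4\Rightarrow 1$ steps.
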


\begin{proof}

$1 \Rightarrow 2$. Let $\{p_{i}\}_{i=1}^{s}$  be the set of prime divisors of $|G|$, $Syl_{p_{i}}(G)$ be the set of $p_{i}$-sylow subgroups of $G$, and $g_{i}\in Syl_{p_{i}}(G)$ be an element of maximal order $o_{i}=exp(Syl_{p_{i}}(G))$ for all $i$. Let $I$ be a minimal ideal of $FG$. Consider the automorphism of $FG$-modules $l_{g_{i}}:I\rightarrow I$ given by $l_{g_{i}}(x)=g_{i}x$ for all $i$, since $F$ is a splitting field for $G$, $l_{g_{i}}(x)= \zeta_{i}x$ for some $\zeta_{i} \in F$, all $i$ and all $x\in I$. Thus $\zeta_{i}$ is a primitive $o_{i}$-th root of unity for all $i$. Finally, since $o_{1},...,o_{s}$ are pairwise relatively prime, $\zeta:=\prod_{i=1}^{s} \zeta_{i}$ has multiplicative order $\prod_{i=1}^{s}o_{i}=\prod_{i=1}^{s}exp(Syl_{p_{i}}(G))=exp(G)$, i.e, $\zeta$ is a primitive $exp(G)$-th root of unity in $F$.\\
 
$2 \Rightarrow 3$. Let $\zeta$ be a primitive $exp(G)$-th root of unity in $F$. Then the number of non-isomorphic simple modules (minimal ideals) is equal to the number of conjugation classes of $G$ (by \cite[Corollaries 24.11 and 4.4]{larry}), this is $|G|$. Thus the minimal ideals of $FG$ have dimension $1$. Hence the orbits under the action of $Gal(F(\zeta)/F)$ on $G$ (where $\zeta$ is any primitive $exp(G)$-th root of unity in some extension of $F$) have size $1$, meaning that the $q^t$-orbits have size $1$ (by \cite[Theorem 4.3, part 2]{garcia-tapia}); or equivalently, $g^{q^{t}}=g$ for all $g\in G$. Therefore $t_{w}=l\mid t$, by Lemmas \ref{super0} and \ref{elemental} (part $2$).\\

$3 \Rightarrow 4$. If $t_{w}\mid t$, then $l\mid t$ (by Lemma \ref{super0}), implying that $w^{q^{t}}=w$ (by Lemma \ref{elemental}, part $2$), i.e., $w^{q^{t}-1}=1$. Therefore $exp(G)=o(w)\mid q^{t}-1$. \\

$4 \Rightarrow 1$. Suppose $exp(G)\mid q^{t}-1$. Then $g^{q^{t}-1}=1$ for all $g\in G$, i.e., $g^{q^{t}}=g$ for all $g\in G$ and so the $q^{t}$-orbits have size $1$. Hence, the minimal ideals  of $FG$ (simple $FG$-modules) have dimension $1$ over $F$ (by \cite[Theorem 4.3, part 2]{garcia-tapia}), which implies that $End_{FG}(S)=F$ for every simple module $S$ (because $End_{FG}(S)\subseteq End_{F}(S)$ and $dim_{F}(End_{F}(S))=1$), i.e., $F$ is a splitting field for $G$.
\end{proof}

It is well-known that if $H$ is a finite group (not necessarily abelian) and $F$ is a field with $char(F)\nmid |H|$ that contains a primitive $exp(H)$-th root of unity, $F$ is a splitting field for $H$ (see, e.g., \cite[Corollary 24.11]{larry}). Even so, it seems that a result establishing the equivalences that appear in Theorem \ref{eq-split} is not known yet. This theorem does not hold true in general. For instance, $\gf{5}S_{3}$ has Wedderburn-Artin decomposition $ \gf{5} \oplus \gf{5} \oplus M_{2}(\gf{5}) $ (by a similar argument to the one in \cite[Example 3.6.12]{grouprings}). Thus $\gf{5}$ is a splitting field for $S_{3}$ (by  \cite[Theorem 3.34]{rep4}), but $\gf{5}$ does not contain an primitive $6$-th root of unity.

\section{Easily computable indecomposable dimension group algebras}\label{s4}

 A module $M\neq 0$ is called indecomposable if $M\neq M_{1}\oplus M_{2}$ for any non-zero submodules $M_{1}$ and $M_{2}$ of $M$.  Let $H \neq 1$ be a finite group. A principal indecomposable module of $\F H$ is a principal ideal generated by a primitive idempotent  \cite[Theorem 54.5]{rep3} (these are the indecomposable $\F H$-submodules of $\F H$). If $\F H$ is such that all the principal indecomposable modules are ECD, then it will be said that  $\gf{q}H$ is an \textbf{\textit{easily computable indecomposable dimension}} group algebra (or simply,  \textit{\textbf{ECID}} group algebra). If $\F H$ is semisimple, then its principal indecomposable modules are precisely its minimal ideals. Thus if $\F H$ is a semisimple \textit{\textbf{ECID}} group algebra it will be said that it is \textbf{\textit{minimal easily computable dimension}} group algebra (or simply,  \textit{\textbf{minimal ECD}} group algebra). In this section, conditions under which a group algebra is a minimal ECD and ECID are studied, and some characterizations of this properties are provided. It is recommended to be familiarized with the topics treated in \cite[Sections 2.5, 2.6]{grouprings},  and \cite[Section 54]{rep3} or \cite[Section $3$C]{rep4}.
 
\begin{teo} \label{super}
Let $w\in G$ be such that $o(w)=exp(G)$. Then 

\[t_{w}=max\{dim_{\F}(I): I \text{ is a minimal ideal of } \F G \}.\]
Moreover, $\F G$ is a minimal ECD group algebra iff $t_{w}\leq p$.  
\end{teo}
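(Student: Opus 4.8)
The plan is to reduce the equality $t_{w}=\max\{\dim_{\F}(I)\}$ to a statement purely about $q$-orbit sizes, and then to unwind the definition of \emph{minimal ECD} into the numerical condition $t_{w}\le p$. The crucial external input is \cite[Theorem 4.3, part 2]{garcia-tapia}, already invoked in the proof of Theorem \ref{eq-split}, which identifies the dimension over $\F$ of the minimal ideal associated with a $q$-orbit $\mathcal{S}_{g}$ with the size $|\mathcal{S}_{g}|=t_{g}$ of that orbit. Granting this correspondence, the set of dimensions of minimal ideals of $\F G$ is exactly $\{t_{g}:g\in G\}$.

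For the first assertion, I would then show that $t_{w}=\max\{t_{g}:g\in G\}$. By Lemma \ref{super0}, $t_{w}=l=lcm\{t_{g}\}_{g\in G}$, so $t_{g}\mid t_{w}$, and hence $t_{g}\le t_{w}$, for every $g\in G$; since the value $t_{w}$ is itself attained (at $g=w$), it is the maximum of the $t_{g}$. Combining this with the correspondence above yields $t_{w}=\max\{\dim_{\F}(I): I \text{ is a minimal ideal of } \F G\}$.

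For the \emph{moreover} part, I would use that $\F G$ is semisimple (as $p\nmid|G|$), so that its principal indecomposable modules coincide with its minimal ideals, and each minimal ideal is generated by a primitive idempotent. Consequently a minimal ideal $I$ is an ECD group code precisely when $\dim_{\F}(I)\le p$, the idempotent-generation hypothesis being automatic. Hence $\F G$ is minimal ECD iff every minimal ideal has dimension at most $p$, i.e.\ iff $\max\{\dim_{\F}(I)\}\le p$, which by the first part is exactly $t_{w}\le p$.

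I expect the proof to be short, and the only point requiring care is not the arithmetic but the bookkeeping around the definitions: verifying that, in the semisimple setting, the ``generated by an idempotent'' clause in the definition of an ECD code is satisfied by every minimal ideal, so that being ECD genuinely collapses to the single inequality $\dim_{\F}(I)\le p$. The substantive mathematical content is imported through \cite[Theorem 4.3, part 2]{garcia-tapia}; the remainder is the $lcm$ argument of the second paragraph together with this definitional check.
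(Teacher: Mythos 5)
Your proposal is correct and follows essentially the same route as the paper: the identification of minimal-ideal dimensions with $q$-orbit sizes via \cite[Theorem 4.3, part 2]{garcia-tapia}, the observation that $t_{w}=l=lcm\{t_{g}\}_{g\in G}$ forces $t_{w}=\max\{t_{g}\}_{g\in G}$, and the reduction of the \emph{moreover} clause to the definition of minimal ECD. Your extra care in checking that every minimal ideal of the semisimple algebra $\F G$ is generated by an idempotent is exactly the step the paper compresses into ``follows from the definition.''
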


\begin{proof} Since  $\{t_{h}\}_{h\in G}=\{dim_{\F}(I): I \text{ is a minimal ideal of } \F G \}$ (by \cite[Theorem 4.3, part 2]{garcia-tapia}) and $t_{w}=l$ (by Lemma \ref{super0}), then
\[ t_{w}= max\{t_{h}\}_{h\in G} = max\{dim_{\F}(I): I \text{ is a minimal ideal of } \F G \}.\]
The rest follows from the definition of minimal ECD group algebra.
\end{proof}

\begin{cor}\label{c-teoprin}
 Let $F$ be an extension of $\F$ of index $t$ that is a splitting field for $G$. Then
 \[t\leq p \Rightarrow \gf{q}G \text{ is a minimal } ECD \text{ group algebra }.  \]
\end{cor}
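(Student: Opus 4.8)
The plan is to chain the two main results of this section, Theorem \ref{eq-split} and Theorem \ref{super}, reducing everything to a divisibility-to-inequality observation. Let $w\in G$ be an element with $o(w)=exp(G)$ (such an element exists, as noted before Lemma \ref{super0}), and write $F=\mathbb{F}_{q^{t}}$ since $F$ is the extension of $\F$ of index $t$.

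First I would invoke the hypothesis that $F$ is a splitting field for $G$. This is statement $1$ of Theorem \ref{eq-split}, so by the equivalence $1 \Leftrightarrow 3$ established there, it gives directly that $t_{w}\mid t$. The key arithmetic step is then immediate: since $t_{w}$ and $t$ are positive integers with $t_{w}\mid t$, we have $t_{w}\leq t$, and combining this with the standing hypothesis $t\leq p$ yields $t_{w}\leq p$.

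Finally I would apply Theorem \ref{super}, which asserts that $\F G$ is a minimal ECD group algebra if and only if $t_{w}\leq p$. Since we have just shown $t_{w}\leq p$, the conclusion $\F G$ is minimal ECD follows at once.

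I expect no genuine obstacle here; the corollary is essentially a repackaging of Theorem \ref{eq-split} and Theorem \ref{super}. The only point requiring a (trivial) argument is the passage from the divisibility $t_{w}\mid t$ to the inequality $t_{w}\leq t$, and one should be careful to use only the direction $1\Rightarrow 3$ of Theorem \ref{eq-split}, since the corollary is stated as a one-way implication and does not require the converse equivalences.
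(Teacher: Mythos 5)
Your proof is correct and is essentially identical to the paper's own argument: both deduce $t_{w}\mid t$ from Theorem \ref{eq-split} via the splitting-field hypothesis, pass to $t_{w}\leq t\leq p$, and conclude by Theorem \ref{super}. The only difference is that you spell out the divisibility-to-inequality step and the directionality of the equivalence, which the paper leaves implicit.
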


\begin{proof} Suppose that $t\leq p $. Since $F$ is a splitting field for $G$, $t_{w}\mid t$ (by Theorem \ref{eq-split}) and so $t_{w}\leq p$. Hence, $\gf{q}G$ is a minimal ECD group algebra (by Theorem \ref{super}).
\end{proof}

The following example illustrates a method that uses Corollary \ref{c-teoprin} to construct infinitely many minimal ECD group algebras.

\begin{ex}\label{ex1}
Let $q=p^{\alpha}\neq 2$, $t'\leq p$, and $1\neq n\mid q^{t'}-1$ (by construction $p\nmid n$). If $G$ is any abelian group of exponent $n$ (there exists an infinite number of such groups),  $\gf{q}G$ is a minimal ECD group algebra (by Theorem \ref{eq-split} and Corollary \ref{c-teoprin}). For instance, let $p=5$, $q=p^{6}$ and $t'=4$. Since  $t'=4\leq p=5$, $q^{t'}-1=2^{5} \cdot 3^{2} \cdot 7 \cdot 13 \cdot 31 \cdot 313 \cdot 601 \cdot 390001$, and $G_{1}=C_{2}\times C_{2^{4}}\times C_{3^{2}}\times C_{3}$ and $G_{2}=C_{2^{3}}\times C_{2^{3}}\times C_{2^{4}}\times C_{3^{2}}$ are groups of exponent $144=2^{4}\cdot 3^{2}\mid q^{t'}-1$, then $\gf{q}G_{1}$ and $\gf{q}G_{2}$ are both minimal ECD group algebras (by Theorem \ref{eq-split} and Corollary \ref{c-teoprin}).
\end{ex}

%If every non-trivial ideal of $\F G$ is an $ECD$ group code, then it is said that $\F G$ is an \textit{easily computable dimension group algebra}. $\gf{q}G$ is ECD group algebra iff $|G|\leq p+1$ and $|G|\neq p$ \cite[p. 16]{garcia-tapia}. Note that for a given power $q\neq 2$ of a prime number $p$, there only exists a finite number of groups $G$ such that $\gf{q}G$ is an ECD group algebra, but there exist infinitely many groups $G$ such that $\gf{q}G$ is a minimal ECD group algebra (because there exists an infinite number of groups having exponent dividing $q^{1}-1$). ECD group algebras are of interest because it is easy to compute the dimension of any of their ideals. Nonetheless, this condition is too restrictive. For instance, if $\F G$ were semisimple with minimal ideals being ECD, one could still compute the dimension of any ideal since these are the direct sum of minimal ones. In addition, for a given $p$, the groups $G$ such that $\F G$ is an ECD group algebra are tiny when compared with the majority of groups $G$ for which $\F G$ is a minimal ECD group algebra. For instance, in Example \ref{ex1}, groups of size arbitrarily large such that $exp(G)=2^{4}\cdot 3^{2}=144$ could have been considered. Thus, for a fixed $p$, minimal ECD group algebras can be arbitrarily larger than ECD group algebras.\\

The relation defined as $g\sim h$ iff $\left\langle g \right\rangle= \left\langle h \right\rangle$ for all $g,h\in G$ is an equivalence relation on $G$. Let $\mathcal{C}_{g}$ and $\mathcal{S}_{g}$ denote the equivalence class under $\sim$ and the  $q$-orbit of $g$ for all $g\in G$, respectively. Then $|\mathcal{C}_{g}|=\phi(o(g))$ and $|\mathcal{S}_{g}|=t_{g}$ for all $g\in G$ where $\phi$ denotes the Euler's totien function.

% \cite[Corollary 2]{idempotin} states that, if $b$ is odd and $\phi$ denotes the Euler's totien function,  $\mathcal{S}_{g}= \mathcal{C}_{g}$ iff one of the following conditions hold:

%\begin{enumerate}

%\item $exp(G)=2$ and $q$ is odd.

%\item $exp(G)=4$ and $q\equiv 3 \, mod\, 4$.

%\item $exp(G)=b^{n}$ and the order of $q$ in $U(\mathbb{Z}_{b^n})$ is $\phi(b^{n})$.

%\item $exp(G)=2b^{n}$ and the order of $q$ in $U(\mathbb{Z}_{2b^n})$ is $\phi(b^{n})$.

%\end{enumerate}

\begin{teo} \label{f1}

 If $\phi(exp(G))\leq p$, then $\gf{q}G$  is a minimal ECD group algebra. In particular, if $|G|=\prod_{i=1}^{s}p_{i}^{e_{i}}$ is the decomposition of $|G|$ into product of powers of distinct prime numbers and  $\prod_{i=1}^{s}p_{i}^{e_{i}-1}(p_{i}-1)< p$, then $\gf{q}G$  is a minimal ECD group algebra.
\end{teo}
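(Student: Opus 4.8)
The plan is to reduce everything to the criterion of Theorem \ref{super}, namely that $\gf{q}G$ is minimal ECD precisely when $t_{w}\leq p$ for an element $w$ of maximal order $o(w)=exp(G)$. The central observation I would make is that, by the reformulation recorded just before Lemma \ref{elemental},
\[ t_{w}=\min \{a\in \mathbb{Z}_{>0}: q^{a}\equiv 1\; mod(exp(G))\}, \]
so that $t_{w}$ is nothing other than the multiplicative order of $q$ in the unit group $(\mathbb{Z}/exp(G)\mathbb{Z})^{*}$. Since $exp(G)\mid |G|$ and $p\nmid |G|$, the integer $q=p^{\alpha}$ is coprime to $exp(G)$ and hence represents an element of that unit group, which has order $\phi(exp(G))$.

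The first statement then follows at once from Lagrange's theorem: the order $t_{w}$ of the element $q$ divides the group order $\phi(exp(G))$, so $t_{w}\leq \phi(exp(G))\leq p$, and Theorem \ref{super} yields that $\gf{q}G$ is a minimal ECD group algebra.

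For the ``in particular'' clause, I would first note that the displayed product is exactly $\phi(|G|)$, by the multiplicativity of the Euler totient on the coprime prime-power factors of $|G|=\prod_{i=1}^{s}p_{i}^{e_{i}}$, since $\phi(p_{i}^{e_{i}})=p_{i}^{e_{i}-1}(p_{i}-1)$. Because $exp(G)\mid |G|$, a comparison of the prime-power factorizations gives $\phi(exp(G))\mid \phi(|G|)$, whence $\phi(exp(G))\leq \phi(|G|)=\prod_{i=1}^{s}p_{i}^{e_{i}-1}(p_{i}-1)<p$. Thus $\phi(exp(G))\leq p$, and the special case is subsumed by the first statement.

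As for difficulty, there is no serious obstacle once $t_{w}$ is identified with $\ord(q)$ in $(\mathbb{Z}/exp(G)\mathbb{Z})^{*}$; the argument is essentially an application of Lagrange's theorem together with the elementary fact that $\phi$ respects divisibility. The only points that I expect to require a line of care are the verification that $\gcd(q,exp(G))=1$ (immediate from $p\nmid |G|$ and $exp(G)\mid |G|$) and the divisibility $\phi(exp(G))\mid \phi(|G|)$, which I would justify by observing that each factor $p_{i}^{b_{i}-1}(p_{i}-1)$ of $\phi(exp(G))$ divides the corresponding factor $p_{i}^{e_{i}-1}(p_{i}-1)$ of $\phi(|G|)$, since the exponent $b_{i}$ of $p_{i}$ in $exp(G)$ is at most $e_{i}$.
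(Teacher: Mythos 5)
Your proof is correct, and its skeleton matches the paper's: both reduce to the criterion $t_{w}\leq p$ of Theorem \ref{super}, both get there by establishing $t_{w}\leq \phi(exp(G))$, and both settle the ``in particular'' clause by comparing $\phi(exp(G))$ with $\phi(|G|)=\prod_{i=1}^{s}p_{i}^{e_{i}-1}(p_{i}-1)$. Where you genuinely diverge is in the justification of the central inequality. The paper argues inside $G$: since $gcd(o(w),q^{j})=1$, every element of the $q$-orbit $\mathcal{S}_{w}$ generates the same cyclic subgroup as $w$, so $\mathcal{S}_{w}\subseteq \mathcal{C}_{w}$ and hence $t_{w}=|\mathcal{S}_{w}|\leq |\mathcal{C}_{w}|=\phi(o(w))=\phi(exp(G))$. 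You instead work in the unit group $(\mathbb{Z}/exp(G)\mathbb{Z})^{*}$: using the identification $t_{w}=\min\{a\in\mathbb{Z}_{>0}: q^{a}\equiv 1\; mod(exp(G))\}$ recorded before Lemma \ref{elemental} (legitimate, since $p\nmid |G|$ and $exp(G)\mid |G|$ give $gcd(q,exp(G))=1$), $t_{w}$ is the multiplicative order of $q$ in that group, and Lagrange's theorem yields $t_{w}\mid \phi(exp(G))$. The two arguments are close cousins, but yours buys a slightly stronger conclusion --- the divisibility $t_{w}\mid \phi(exp(G))$ rather than the bare inequality --- while the paper's containment $\mathcal{S}_{g}\subseteq\mathcal{C}_{g}$ has the advantage of being reused later (e.g., in the proof of Theorem \ref{hd-bounds}). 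Your handling of the ``in particular'' step is also more explicit than the paper's one-line appeal to the definition of the totient: you isolate exactly the needed fact, namely that $exp(G)\mid |G|$ forces $\phi(exp(G))\mid \phi(|G|)$, hence $\phi(exp(G))\leq \phi(|G|)<p$.
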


\begin{proof}
Let $g\in G$. Since $p\nmid |G|$, $gcd(o(g), q^{j})=1$ for all $j\in \mathbb{Z}_{>0}$ and therefore $o(g^{q^{j}})=\frac{o(g)}{gcd(o(g), q^{j})}=o(g)$ for all $j\in \{1,...,t_{g}\}$. Thus two elements in the same $q$-orbit of $g$ generate the same cyclic group as $g$, i.e.,  $\mathcal{S}_{g}\subseteq \mathcal{C}_{g}$. Therefore  $|\mathcal{S}_{g}|\leq |\mathcal{C}_{g}|$ for all $g\in G$. Then
   \[1 \leq t_{w}=|\mathcal{S}_{w}|\leq |\mathcal{C}_{w}|=\phi(o(w))=\phi(exp(G)).\] 
Hence, if $\phi(exp(G))< p$, $t_{w} \leq p$ and so $\gf{q}G$ is a minimal ECD group algebra (by Theorem \ref{super}). The rest follows from the definition of the Euler's totient function.
\end{proof}

%In general, the converse of Theorem \ref{f1} is not true, as can be seen in the following example.

%\begin{ex}
%Let $G$ be an elementary abelian $11$-group, $p=5$ and $q=5^2$. Let $g\in G-\{1\}$, then $o(g)=11$, and so $t_{g}$ is constant for all $g\in G-\{1\}$.  By Fermat's little Theorem $25^{10}\equiv 1 \; mod\; 11$, i.e., $g^{25^{10}}=g$ for all $g\in G-\{1\}$ so that $t_{g}\mid 10$ for all $g\in G-\{1\}$. Thus $t_{g}\in \{2,5,10\}$ for all $g\in G-\{1\}$. Since $25^{2}\equiv 9 \; mod\; 11$ and $25^{5}\equiv 1 \; mod\; 11$, then $t_{g}\leq p$ for all $g\in G$, implying that $\gf{q}G$ is an $ECD$ group algebra. However, $\phi(exp(G))=\phi(11)=10\nleq p= 5$.  
%\end{ex}

%Theorem \ref{nabelian1} presents a sufficient condition under which the semisimple group algebra of a finite group (not necessarily abelian) is a minimal ECD group algebra.\\

\begin{remark}\label{artin}
For the upcoming results, it will be useful to remember some basic facts and terminology of the theory of Artinian semisimple rings. Let $R$ be an Artinian semisimple ring. Then there exists a unique collection $\{B_{k}\}_{k=1}^{n}$ of simple rings called the simple components of $R$ such that $R=\oplus_{k=1}^{n}B_{k}$ (by \cite[Theorem 2.6.4, Proposition 2.6.6]{grouprings}), and $B_{k} \cong M_{n_{k}}(F_{k})$ (as rings) for some division ring $F_{k}$ for all $k$ (by \cite[Lemma 2.6.16]{grouprings}). In particular, if $R$ is a finite semisimple $\F$-algebra, then the simple components $B_{k}$'s of  $R$ are such that $B_{k} \cong M_{n_{k}}(F_{k})$ as $\F$-algebra for some field extention $F_{k}$ of $\F$  for all $k$ (by \cite[Lemma 4.1]{larry}). % taking $A=B_{k}$ and $V\subset A$ any minimal ideal in
  In addition, if $H\neq 1$ is a finite group such that $\F H$ is semisimple (i.e., $p\nmid |H|$), the simple components $B_{k}$'s such that $B_{k}\cong F_{k}$ are called the commutative simple components of $\F H$ (these exist since at least one of the $n_{k}$'s is equal to $1$, because the ideal generated by $\F H(\sum_{h\in H}h)$ is a simple component isomorphic to $\F$).
 Thus, if $H$ is non-abelian, there exist $r,s\in \mathbb{Z}_{>0}$ such that  $\F H= \left( \oplus_{i=1}^{r} B_{i}\right) \oplus \left( \oplus_{j=1}^{s} B_{j} \right)$ where $B_{i}\cong F_{i}$ and $B_{j}\cong M_{n_{j}}(F_{j}) $  as $\F$-algebras where the fields $F_{i}$ and $F_{j}$ are extensions of $\F$ for $i=1,..., r$ and $j=1,..., s$,  and $n_{j}\geq 2$ for all $j=1,...,s$. 
\end{remark} 

For Theorem \ref{nabelian}, which presents a characterization of non-commutative semisimple group algebras that are minimal ECD, will be asumed the terminology and the decomposition of $\F H$ mentioned in Remark \ref{artin}.

\begin{teo}\label{nabelian} Let $H\neq 1$ be a finite non-abelian group such that $p\nmid |H|$, $H'$ be the commutator subgroup of $H$, and $\widehat{H'}:=\frac{1}{|H'|}\left(\sum_{h\in H'}h\right)$.  Let $G:=H/H'$ and $w\in G$ such that $o(w)=exp(G)$. Then: 

\begin{enumerate}
\item The largest dimension of a minimal ideal contained in $(\F H)\widehat{H'}$ is $t_{w}$. 

\item Every minimal ideal contained in $(\F H)(1-\widehat{H'})$ has dimension $n_{j}\cdot[F_{j}: \F]$ for some $j$.

\item $\F H$  is a minimal ECD group algebra iff  $t_{w}\leq p$  and  $max\{n_{j}\cdot[F_{j}: \F]\}_{j=1}^{s}\leq p$.
\end{enumerate}
\end{teo}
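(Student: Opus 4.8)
The plan is to exploit the central idempotent $\widehat{H'}$ to split $\F H$ into two pieces whose minimal ideals account separately for the two bounds. Since $p \nmid |H|$ (so in particular $p \nmid |H'|$) and $H'$ is normal in $H$, the element $\widehat{H'}$ is a central idempotent of $\F H$, whence $\F H = (\F H)\widehat{H'} \oplus (\F H)(1-\widehat{H'})$ is a decomposition into two-sided ideals and every simple component $B_k$ of $\F H$ lies in exactly one summand. The key structural fact I would invoke is the standard isomorphism $(\F H)\widehat{H'} \cong \F[H/H'] = \F G$ of $\F$-algebras: the canonical projection $\F H \to \F G$, $h \mapsto hH'$, sends $\widehat{H'}$ to $1$ and has kernel $(\F H)(1-\widehat{H'})$. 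Because this isomorphism preserves $\F$-dimension and carries minimal ideals to minimal ideals, the minimal ideals contained in $(\F H)\widehat{H'}$ are in dimension-preserving bijection with those of $\F G$. Part 1 then follows at once from Theorem \ref{super}, which identifies $t_w$ with the maximal $\F$-dimension of a minimal ideal of $\F G$.

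For part 2, I would first show that $(\F H)(1-\widehat{H'})$ collects precisely the non-commutative simple components $B_j \cong M_{n_j}(F_j)$. Since $(\F H)\widehat{H'} \cong \F G$ is commutative, all of its simple components are fields; conversely, a commutative component $B_k \cong F_k$ corresponds to an $\F$-algebra homomorphism $\phi_k \colon \F H \to F_k$ whose restriction to $H$ maps into the abelian group $F_k^{*}$ and therefore factors through $G = H/H'$. This gives $\phi_k(h) = 1$ for all $h \in H'$, hence $\phi_k(\widehat{H'}) = 1$, which forces $B_k \subseteq (\F H)\widehat{H'}$. Thus every commutative component lies in $(\F H)\widehat{H'}$ and $(\F H)(1-\widehat{H'}) = \bigoplus_{j=1}^{s} B_j$. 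A minimal ideal contained in this summand is then a minimal left ideal of some $B_j \cong M_{n_j}(F_j)$, i.e. a column space isomorphic to $F_j^{n_j}$, whose $\F$-dimension is $n_j \cdot [F_j : \F]$, as claimed.

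Part 3 becomes a bookkeeping step. In the semisimple case $\F H$ is minimal ECD exactly when every minimal ideal has $\F$-dimension at most $p$, each minimal ideal being automatically generated by a primitive idempotent. By parts 1 and 2 the set of dimensions of the minimal ideals of $\F H$ equals $\{t_h\}_{h\in G} \cup \{n_j[F_j:\F]\}_{j=1}^{s}$, whose maximum is $\max\{\, t_w,\ \max\{n_j[F_j:\F]\}_{j=1}^{s} \,\}$, since $t_w = \max\{t_h\}_{h\in G}$ by Lemma \ref{super0} and Theorem \ref{super}. Hence $\F H$ is minimal ECD iff both $t_w \le p$ and $\max\{n_j[F_j:\F]\}_{j=1}^{s} \le p$.

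I expect the main obstacle to be part 2, namely pinning down rigorously that the two complementary ideals separate the commutative components from the non-commutative ones. The forward inclusion (components inside $(\F H)\widehat{H'}$ are commutative) is immediate from commutativity of $\F G$, but the converse — that no commutative component can hide in $(\F H)(1-\widehat{H'})$ — requires the argument that every linear character of $H$ factors through the abelianization, which is exactly where one must make precise that $\widehat{H'}$ acts as the identity on those components, and only those, pulled back from $\F G$.
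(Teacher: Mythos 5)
Your proposal is correct and takes essentially the same route as the paper: the decomposition $\F H=(\F H)\widehat{H'}\oplus(\F H)(1-\widehat{H'})$ with $(\F H)\widehat{H'}\cong \F G$ being exactly the sum of the commutative simple components, then Theorem \ref{super} for part 1, the column-ideal description of minimal left ideals of $M_{n_{j}}(F_{j})$ for part 2, and bookkeeping for part 3. The only difference is that the paper quotes this decomposition directly from \cite[Proposition 3.6.11]{grouprings}, whereas you re-derive it via the observation that every linear character of $H$ kills $H'$ and hence fixes $\widehat{H'}$, which is a correct, self-contained substitute for that citation.
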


\begin{proof}
Let $U:=\left(\F H\right)\widehat{H'}$ and $V:=\left(\F H\right)(1-\widehat{H'})$. Then, by \cite[Proposition 3.6.11]{grouprings}, $\F H= U \oplus V$ where $U\cong \F G$ (as $\F $-algebras) is the sum of all commutative simple components of $\F H$ and $V$ is the sum of all the others. By construction of the simple components of $\F H$, every the minimal ideal is contained in a unique simple component.

\begin{enumerate}

\item It follows from the fact that $G$ is abelian, $U\cong \F G$ and Theorem \ref{super}.

\item Let $I$ be a minimal ideal contained in $V$. Then there exists $j\in \{1,...,s\}$ such that $I$ is contained in the simple component $ B_{j}\cong A_{j}:= M_{n_{j}}(F_{j})$. In addition, since all the minimal ideals of $A_{j}$ are isomorphic (as $A_{j}$-modules) to the left ideal  $A_{j}o_{11}$ generated by the primitive idempotent $o_{11}\in A_{j}$ that has $1$ (with $1\in F_{j}$) in the $11$-entry and zero in the other ones, then $dim_{\F}(I)=dim_{\F}(A_{j}o_{11})=n_{j}\cdot[F_{j}: \F]$ (where the last equality is by \cite[Theorem 3.4.9, part $\textit{iv}$]{grouprings}).

\item Suppose that $\F H$  is a minimal ECD group algebra. Let $I$ be the minimal ideal with largest dimension contained in $U$, then  $t_{w}=dim_{\F}(I)\leq p$ (by part $1$). Let $j\in \{1,...,s\}$ and $A_{j}:=M_{n_{j}}(F_{j})$. Since $A_{j}o_{11}$ (where $o_{11}$ is as in the proof of part $2$) is minimal with dimension $n_{j}\cdot[F_{j}: \F]$ (by \cite[Theorem 3.4.9, part $\textit{iv}$]{grouprings}) and $B_{j}\cong A_{j}$, then $A_{j}o_{11}$ is isomorphic to a minimal ideal of $B_{j}$ and so $n_{j}\cdot[F_{j}: \F]\leq p$. Therefore,  $t_{w}\leq p$  and  $max\{n_{j}\cdot[F_{j}: \F]\}\leq p$. Conversely, suppose that $t_{w}\leq p$  and  $max\{n_{j}\cdot[F_{j}: \F]\}_{j=1}^{s}\leq p$. If $I$ is a minimal ideal of $\F H $ contained in $U$, then $dim_{\F}(I)\leq t_{w}\leq p$ (by part $1$). On the other hand, if $I$ is a minimal ideal of $\F H $ contained in $V$, then $dim_{\F}(I)\in \{n_{j}\cdot[F_{j}: \F]\}_{j=1}^{s}$ (by part $2$) and so $dim_{\F}(I)\leq max\{n_{j}\cdot[F_{j}: \F]\}_{j=1}^{s}\leq p$. Hence, all the minimal ideals of $\F H$ have dimension less than or equal to $p$.
\end{enumerate}
\end{proof}

%Note that despite Theorem \ref{nabelian2} being a characterization of semisimple group algebras that are minimal ECD, in some cases could be better to use Theorem \ref{nabelian1} for determining whether a group algebra is minimal ECD because this last does not depend on knowing the Wedderburn-Artin decomposition for its application.

The same hypotheses and notation from Theorem \ref{nabelian} will be asumed for Corollaries \ref{cnabelian} and \ref{nmax}. % Remember the following notation and terminology will be of use later.
 Let $A$ be a finite-dimensional algebra over a field $k$ and  $M$ a finitely generated $A$-module. If $E$ is an extension field of $k$, $M^{E}:=E\otimes_{k} M$ denotes the left $A^{E}$-module, where $A^{E}=E\otimes_{k}A$. $M$ is said to be absolutely simple if $M^{E}$ is a simple $A^{E}$-module for all extension field $E$ of $k$ \cite[Definition 3.42]{rep4}.
\begin{cor} \label{cnabelian} The following statements hold true:

\begin{enumerate}
\item If $t_{w}\leq p$ and $|H|-[H:H']\leq p$, then $\F H$ is a minimal ECD group algebra.

\item If $\phi(exp(G))\leq p$ or there exists $t\in \mathbb{Z}_{>0}$ such that $exp(G)\mid q^{t}-1$ with $t\leq p$; and $max\{n_{j}\cdot[F_{j}: \F]\}_{j=1}^{s}\leq p$, then $\F H$ is a minimal ECD group algebra.

\item If a minimal ideal $I_{j}$ contained in the simple component $B_{j}$ of $\F H$ is absolutely simple for all $j$, then $\F H$ is minimal ECD iff  $t_{w}\leq p$ and $max\{n_{j}\}_{j=1}^{s}\leq p$. In particular, if $\F$ is a splitting field for $H$, then $\F H$ is minimal ECD group algebra iff $max\{n_{j}\}_{j=1}^{s}\leq p$.
\end{enumerate}
\end{cor}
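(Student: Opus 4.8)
The plan is to reduce all three parts to the two numerical conditions furnished by Theorem \ref{nabelian}, part 3, namely $t_{w}\leq p$ together with $\max\{n_{j}\cdot[F_{j}:\F]\}_{j=1}^{s}\leq p$. Throughout I would keep the decomposition $\F H=U\oplus V$ used in the proof of Theorem \ref{nabelian}, where $U=(\F H)\widehat{H'}\cong \F G$ collects the commutative simple components and $V=(\F H)(1-\widehat{H'})=\oplus_{j=1}^{s}B_{j}$ with $B_{j}\cong M_{n_{j}}(F_{j})$ and $n_{j}\geq 2$.

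For part 1 the condition $t_{w}\leq p$ is given, so only $\max\{n_{j}\cdot[F_{j}:\F]\}\leq p$ needs proof. First I would record that $\dim_{\F}U=|G|=[H:H']$ (because $U\cong \F G$), whence $\dim_{\F}V=|H|-[H:H']$. Writing $\dim_{\F}V=\sum_{j=1}^{s}n_{j}^{2}[F_{j}:\F]$, for each fixed $j$ the single term satisfies $n_{j}[F_{j}:\F]\leq n_{j}^{2}[F_{j}:\F]\leq \dim_{\F}V=|H|-[H:H']\leq p$. Taking the maximum over $j$ and invoking Theorem \ref{nabelian}, part 3, concludes this part. For part 2 the hypothesis already supplies $\max\{n_{j}\cdot[F_{j}:\F]\}\leq p$, so I need only derive $t_{w}\leq p$ from either alternative. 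If $\phi(exp(G))\leq p$, the chain $t_{w}=|\mathcal{S}_{w}|\leq|\mathcal{C}_{w}|=\phi(exp(G))$ established inside the proof of Theorem \ref{f1} gives $t_{w}\leq p$ immediately. If instead $exp(G)\mid q^{t}-1$ for some $t\leq p$, then the equivalence of conditions 3 and 4 in Theorem \ref{eq-split}, applied to the extension of index $t$, yields $t_{w}\mid t$, hence $t_{w}\leq t\leq p$. In both cases Theorem \ref{nabelian}, part 3, applies.

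For part 3 the new ingredient is that each minimal ideal $I_{j}\subseteq B_{j}$ is absolutely simple. I would use the standard fact that a simple module over a finite-dimensional $\F$-algebra is absolutely simple exactly when its endomorphism ring equals $\F$. Since $B_{j}$ is a simple direct summand, the $\F H$-action on $I_{j}$ factors through the projection onto $B_{j}$, so $End_{\F H}(I_{j})=End_{B_{j}}(I_{j})\cong F_{j}$; absolute simplicity thus forces $[F_{j}:\F]=1$ for every $j$. Consequently $n_{j}[F_{j}:\F]=n_{j}$ and $\max\{n_{j}[F_{j}:\F]\}=\max\{n_{j}\}$, so Theorem \ref{nabelian}, part 3, reads precisely ``$t_{w}\leq p$ and $\max\{n_{j}\}\leq p$'', giving the claimed equivalence. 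For the special case where $\F$ is a splitting field for $H$, every simple $\F H$-module $S$ has $End_{\F H}(S)=\F$, so all $I_{j}$ are automatically absolutely simple; moreover, as the simple $\F G$-modules are exactly the simple $\F H$-modules on which $H'$ acts trivially, $\F$ is also a splitting field for $G=H/H'$, and Theorem \ref{eq-split} with index $t=1$ gives $t_{w}\mid 1$, i.e. $t_{w}=1\leq p$. Hence the condition collapses to $\max\{n_{j}\}\leq p$.

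I expect the main obstacle to be the endomorphism-ring argument in part 3: correctly reducing $End_{\F H}(I_{j})$ to $End_{B_{j}}(I_{j})\cong F_{j}$ over a simple direct summand, and invoking the equivalence between absolute simplicity and $End_{\F H}(I_{j})=\F$ (which controls the behavior of the module under scalar extension). By contrast, parts 1 and 2 are essentially dimension bookkeeping layered on the already-established Theorem \ref{nabelian} and the inequalities inside Theorems \ref{f1} and \ref{eq-split}.
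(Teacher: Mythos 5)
Your proposal is correct and follows essentially the same route as the paper: all three parts are reduced to the two conditions of Theorem \ref{nabelian}, part 3, via the dimension count $|H|-[H:H']=\sum_{j}n_{j}^{2}[F_{j}:\F]$ in part 1, the bounds $t_{w}\leq\phi(exp(G))$ and $t_{w}\mid t$ from Theorems \ref{f1} and \ref{eq-split} in part 2, and the identification $F_{j}=End_{\F H}(I_{j})=\F$ under absolute simplicity in part 3. The only differences are cosmetic (you derive $t_{w}\leq p$ directly where the paper routes through Corollary \ref{c-teoprin} and Theorem \ref{super}, and you get $t_{w}=1$ in the splitting-field case via the quotient $H/H'$ rather than via Theorem \ref{nabelian}, part 1), so no substantive comparison is needed.
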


\begin{proof}

\begin{enumerate}

\item Suppose that $t_{w}\leq p$ and $|H|-[H:H']\leq p$. Then, by \cite[Proposition 3.6.11]{grouprings}, $dim_{\F}\left(\oplus_{i=1}^{r} F_{i}\right)=dim_{\F}\left(\F (H/H')\right)$. Hence,  $|H|-[H:H']=\sum_{j=1}^{s}n_{j}^{2}\cdot dim_{\F}(F_{j})=\sum_{j=1}^{s}n_{j}^{2}\cdot[F_{j}: \F]\leq p$. Therefore $max\{n_{j}\cdot[F_{j}: \F]\}_{j=1}^{s}<p$, implying then $\F H$ is a minimal ECD group algebra (by Theorem \ref{nabelian}, part $3$).

\item  If  $\phi(exp(G))\leq p$, then $\F G$ is a minimal ECD group algebra (by Theorem \ref{f1}). On the other hand, if there exists $t\in \mathbb{Z}_{>0}$ such that $exp(G)\mid q^{t}-1$ with $t\leq p$, then $\F G$ is a minimal ECD group algebra (by Theorem \ref{eq-split} and Corollary \ref{c-teoprin}). Thus, in both cases, $\F G$ is a minimal ECD group algebra, implying that $t_{w}\leq p$ (by Theorem \ref{super}). Thus, if $max\{ n_{j}\cdot[F_{j}: \F]\}_{j=1}^{s}\leq p$, $\F H$ is a minimal ECD group algebra (by Theorem \ref{nabelian}, part $3$).

\item Suppose that a minimal ideal $I_{j}$ contained in the simple component $B_{j}$ of $\F H$ is absolutely simple for all $j$.  Then $F_{j}=End_{\F H}(I_{j})=\F$ where the first equality is by construction of $F_{j}$ (see \cite[Lemma 4.1]{larry}) and the second one is by \cite[Theorem 3.43]{rep4} for all $j$. Thus $[F_{j}:\F]=1$ for all $j$ and so $max\{n_{j}\cdot [F_{j}: \F]\}_{j=1}^{s}=max\{n_{j}\}_{j=1}^{s}$. Hence the conditions $t_{w}\leq p$ and $max\{n_{j}\cdot [F_{j}: \F]\}_{j=1}^{s}\leq p$ in Theorem \ref{nabelian} (part $3$) become $t_{w}\leq p$ and $max\{n_{j}\}_{j=1}^{s}\leq p$. In particular, if $\F$ is a splitting field for $H$, the sum $U$ of the commutative simple components of $\F H$ is isomorphic to $\F G\cong \oplus_{i=1}^{r} \F $ (by \cite[Proposition 3.6.11]{grouprings}), so that $t_{w}=1$ (by Theorem \ref{nabelian}, part $1$). In addition, $F_{j}=\F$ for all $j$ (by \cite[Theorem 3.34]{rep4}). The rest follows from Theorem \ref{nabelian}, part $3$.
 \end{enumerate}
\end{proof}

\begin{cor}\label{nmax}
Let $\gamma=|H|-|H/H'|$, $b_{0}=max \left\lbrace  \left\lfloor \sqrt{\frac{\gamma}{f}} \right \rfloor \cdot f  \right\rbrace_{f=1}^{\lfloor \frac{\gamma}{4} \rfloor}$. Then the following statements hold:

\begin{enumerate}

\item $max\{n_{j}\cdot [F_{j}: \F]\}_{j=1}^{s}\leq b_{0}\leq \lfloor \frac{\gamma}{2}\rfloor$.

\item  If $t_{w}\leq p$, and $b_{0}\leq p$ or $\lfloor \frac{\gamma}{2}\rfloor \leq p$, then $\F H$ is a minimal ECD group algebra.

\item If a minimal ideal $I_{j}$ contained in the simple component $B_{j}$ of $\F H$ is absolutely simple for all $j$,  $ \lceil \sqrt{\frac{\gamma}{s}} \rceil \leq max\{n_{j}\}_{j=1}^{s} \leq  \lfloor \sqrt{\gamma} \rfloor$. In particular, if  $\lfloor \sqrt{\gamma} \rfloor \leq p$ and $t_{w}\leq p$, then $\F H$ is a minimal ECD group algebra. Moreover, if $\F$ is splitting field for $H$ and $\lfloor \sqrt{\gamma} \rfloor \leq p$, then $\F H$ is a minimal ECD group algebra.
\end{enumerate}
\end{cor}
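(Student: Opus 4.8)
The plan is to reduce all three parts to the single identity
\[\gamma = |H| - [H:H'] = \sum_{j=1}^{s} n_{j}^{2}\,[F_{j}:\F],\]
which is precisely the dimension count already performed in the proof of Corollary \ref{cnabelian} (part $1$): in the Wedderburn--Artin decomposition of Remark \ref{artin} one compares $dim_{\F}(\F H)=|H|$ with $dim_{\F}(U)=dim_{\F}(\F G)=[H:H']$ and $dim_{\F}(V)=\sum_{j}n_{j}^{2}[F_{j}:\F]$. Throughout I keep in mind that $n_{j}\geq 2$ and $[F_{j}:\F]\geq 1$, and that $\gamma\geq 4$ because $H$ is non-abelian forces at least one non-commutative component (so $s\geq 1$ and $\lfloor \gamma/4\rfloor\geq 1$, making the index range defining $b_{0}$ non-empty).

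For part $1$ I would fix $j$ and set $f:=[F_{j}:\F]$. From the identity $n_{j}^{2}f\leq\gamma$; since $n_{j}\geq 2$ this gives $4f\leq\gamma$, i.e. $f\leq\lfloor \gamma/4\rfloor$, so $f$ lies in the range over which $b_{0}$ is taken. Also $n_{j}^{2}\leq\gamma/f$ yields $n_{j}\leq\lfloor\sqrt{\gamma/f}\rfloor$ (as $n_{j}$ is an integer), whence $n_{j}[F_{j}:\F]=n_{j}f\leq\lfloor\sqrt{\gamma/f}\rfloor\,f\leq b_{0}$; maximizing over $j$ gives $max\{n_{j}[F_{j}:\F]\}_{j=1}^{s}\leq b_{0}$. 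For the second inequality, for every admissible $f$ the integer $\lfloor\sqrt{\gamma/f}\rfloor\,f$ is at most $\sqrt{\gamma/f}\,f=\sqrt{\gamma f}\leq\sqrt{\gamma\cdot(\gamma/4)}=\gamma/2$ (using $f\leq\gamma/4$), and being an integer it is at most $\lfloor \gamma/2\rfloor$; taking the maximum gives $b_{0}\leq\lfloor \gamma/2\rfloor$. Part $2$ is then immediate: by part $1$, $max\{n_{j}[F_{j}:\F]\}_{j=1}^{s}\leq b_{0}\leq\lfloor \gamma/2\rfloor$, so either hypothesis forces $max\{n_{j}[F_{j}:\F]\}\leq p$, and with $t_{w}\leq p$ the algebra is minimal ECD by Theorem \ref{nabelian} (part $3$).

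For part $3$ I would invoke absolute simplicity exactly as in the proof of Corollary \ref{cnabelian} (part $3$): $F_{j}=End_{\F H}(I_{j})=\F$, so $[F_{j}:\F]=1$ and the identity collapses to $\gamma=\sum_{j=1}^{s}n_{j}^{2}$. Then $max\{n_{j}\}^{2}\leq\sum_{j}n_{j}^{2}=\gamma$ gives $max\{n_{j}\}\leq\lfloor\sqrt{\gamma}\rfloor$, while $\gamma=\sum_{j}n_{j}^{2}\leq s\cdot max\{n_{j}\}^{2}$ gives $max\{n_{j}\}\geq\lceil\sqrt{\gamma/s}\rceil$. The two concluding ECID statements follow from Theorem \ref{nabelian} (part $3$): if $\lfloor\sqrt{\gamma}\rfloor\leq p$ and $t_{w}\leq p$, then $max\{n_{j}[F_{j}:\F]\}=max\{n_{j}\}\leq p$; and when $\F$ is a splitting field one has $F_{j}=\F$ for all $j$ and $t_{w}=1$ (by the ``in particular'' clause of Corollary \ref{cnabelian}, part $3$), so both hypotheses of Theorem \ref{nabelian} (part $3$) hold as soon as $\lfloor\sqrt{\gamma}\rfloor\leq p$.

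The only genuinely delicate point is the double-floor estimate in part $1$: pinning down the correct range $1\leq f\leq\lfloor \gamma/4\rfloor$ from $n_{j}\geq 2$, and arranging the floors so that every inequality stays between integers (in particular passing from $\sqrt{\gamma f}\leq\gamma/2$ to $\lfloor\sqrt{\gamma/f}\rfloor f\leq\lfloor \gamma/2\rfloor$). Everything else is a direct appeal to the identity and to Theorem \ref{nabelian} (part $3$).
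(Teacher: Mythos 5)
Your proposal is correct and follows essentially the same route as the paper: the same identity $\gamma=\sum_{j=1}^{s}n_{j}^{2}[F_{j}:\F]$, the same use of $n_{j}\geq 2$ to confine $[F_{j}:\F]$ to $\{1,\dots,\lfloor\gamma/4\rfloor\}$, the same floor estimates giving $b_{0}\leq\lfloor\gamma/2\rfloor$, and the same appeals to Theorem \ref{nabelian} (part $3$) and to absolute simplicity for part $3$. The only difference is cosmetic: the paper packages the part-$1$ bound through the auxiliary set $Y=\bigcup_{f}Y_{f}$ and the identity $\max(Y)=b_{0}$, whereas you run the inequality chain directly, which is a slightly leaner presentation of the identical argument.
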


\begin{proof}
Note that $\gamma$ is the dimension of the sum of the non-commutative simple components of $\F H$ (by \cite[Proposition 3.6.11]{grouprings}) so that $\gamma=\sum_{j=1}^{s}n_{j}^{2}\cdot [F_{j}:\F]$. 

\begin{enumerate}

\item Let $Y:=\{n\cdot f: n\in\mathbb{Z}_{\geq 2} \wedge n^{2}f\leq \gamma\}_{f=1}^{\lfloor \frac{\gamma}{4} \rfloor}$ and $j_{0}\in \{1,...,s\}$ be such that $n_{j_{0}}\cdot[F_{j_{0}}: \F]= max\{n_{j}\cdot[F_{j}: \F]\}_{j=1}^{s}$. Then $2\leq n_{j_{0}}$  and  $n_{j_{0}}^{2}\cdot[F_{j_{0}}: \F] \leq \sum_{j=1}^{s}n_{j}^{2}\cdot [F_{j}:\F]=\gamma $ so that $[F_{j_{0}}: \F]\in \{1,...,\lfloor \frac{\gamma}{4} \rfloor \}$. This implies that $max\{n_{j}\cdot[F_{j}: \F]\}_{j=1}^{s}\leq max(Y)$.  Note that $Y= \bigcup_{f=1}^{\lfloor \frac{\gamma}{4} \rfloor} Y_{f}$ where $Y_{f}=\{n \cdot f: n\in\mathbb{Z}_{\geq 2} \wedge n^{2} \cdot f\leq \gamma\}$ for $f=1,...,\lfloor \frac{\gamma}{4} \rfloor$, and that $max(Y_{f})=max(\{n \cdot f: n\in\mathbb{Z}_{\geq 2} \wedge n\leq \left\lfloor \sqrt{\frac{\gamma}{f}} \right \rfloor \})= \left\lfloor \sqrt{\frac{\gamma}{f}} \right \rfloor \cdot f$ for all $f$,  so that 
\begin{eqnarray*}
max(Y)&=& max \left(\bigcup_{f=1}^{\lfloor \frac{\gamma}{4} \rfloor} Y_{f}\right) = max \left\lbrace max(Y_{f}) \right\rbrace_{f=1}^{\lfloor \frac{\gamma}{4} \rfloor} \\
      &=& max \left\lbrace  \left\lfloor \sqrt{\frac{\gamma}{f}} \right \rfloor \cdot f \right\rbrace_{f=1}^{\lfloor \frac{\gamma}{4} \rfloor}=b_{0} \leq max \left\lbrace  \left( \sqrt{\frac{\gamma}{f}} \right) \cdot f \right\rbrace_{f=1}^{\lfloor \frac{\gamma}{4} \rfloor}\\
      &=& max \left\lbrace  \sqrt{\gamma} \cdot \left(\frac{f}{\sqrt{f}}\right): f\in[1,\frac{\gamma}{4}] \right\rbrace = \frac{\gamma}{2}
\end{eqnarray*}

where the last equality is because $\dfrac{x}{\sqrt{x}}$ is an increasing function. Therefore $max\{n_{j}\cdot[F_{j}: \F]\}_{j=1}^{s}\leq max(Y)=b_{0} \leq  \left\lfloor  \frac{\gamma}{2} \right\rfloor$.

\item  It follows from part $1$ and Theorem \ref{nabelian} (part $3$).

\item Suppose that a minimal ideal $I_{j}$ contained in the simple component $B_{j}$ of $\F H$ is absolutely simple for all $j$. Then $max\{n_{j}\cdot [F_{j}: \F]\}_{j=1}^{s}=max\{n_{j}\}_{j=1}^{s}$ (by a similar argument to the one given in the proof of Corollary \ref{cnabelian}, part $3$). Thus, if $j_{0}\in \{1,...,s\}$ is such that $n_{j_{0}}=max\{n_{j}\}_{j=1}^{s}$, then $n_{j_{0}}^{2}\leq \gamma$ and $\gamma \leq s \cdot  n_{j_{0}}^{2}$ so that $ \lceil \sqrt{\frac{\gamma}{s}} \rceil \leq n_{j_{0}} \leq \lfloor \sqrt{\gamma} \rfloor$. In particular, if $\lfloor \sqrt{\gamma} \rfloor\leq p$ and $t_{w}\leq p$, then $\F H$ is a minimal ECD group algebra (by Theorem \ref{nabelian}, part $3$). Moreover, if $\F$ is splitting field for $H$, then $t_{w}=1$ and every simple module is absolutely simple (by \cite[Theorem 3.34]{rep4}), therefore $\lfloor \sqrt{\gamma} \rfloor\leq p$ implies that $\F H$ is a minimal ECD group algebra.
\end{enumerate}
\end{proof}

Corollary \ref{nmax} provides an efficient way for determining whether $\F H$ is minimal ECD group algebra because the conditions necessary for its application are purely arithmetic and do not depend on knowing the Wedderburn-Artin decomposition of $\F H$. Example \ref{exnabelian} illustrates this Corollary, but first, it will be useful to consider the following Remark \ref{ceq-split}, which presents a necessary condition for being an splitting field for a finite group.

\begin{remark}\label{ceq-split}
Let $H\neq 1$ be a finite group such that $p\nmid |H|$, and  $H'$ be the commutator subgroup of $H$. If $\F$ is a splitting field for $H$, then $\F$ contains a primitive $exp(H/H')$-th root of unity.
\end{remark}

\begin{proof}
If $\F$ is a splitting for $H$, then the sum of all the commutative simple components of $\F H$ is isomorphic to both  $\oplus_{i=1}^{t} \F$ and $\F (H/H')$ (by \cite[Proposition 3.6.11]{grouprings}). Thus, by  \cite[Theorem 3.34]{rep4}, $\F$ is splitting field for $H/H'$ which is abelian so that  $\F$ contains a primitive $exp(H/H')$-th root of unity (by Theorem \ref{eq-split}).
\end{proof}

\begin{ex}\label{exnabelian}
Let $H=SL(2,3)$. Then $|H|=24=2^{3}\times 3$  and $|H/H'|=|SL(2,3)/$  $ Q|=3$ (where $Q$ denotes the quaternion group) so that $\gamma=|H|-|H/H'|=21$ and $b_{0}=max \left\lbrace  \left\lfloor \sqrt{\frac{\gamma}{f}} \right \rfloor \cdot f  \right\rbrace_{f=1}^{5}=10$. Thus, if $p\in \{11,13,17,19\}$  then $\mathbb{F}_{p}$  is a finite field such that $b_{0}\leq p$ and so  $\mathbb{F}_{p}H$ is a minimal ECD group algebra (by Corollary \ref{nmax}, part $2$). On the other hand, if $p\in \{5,7\}$, then $\lfloor \sqrt{\gamma} \rfloor=4\leq p$ so that there exists a finite extension $\mathbb{F}_{p^{a_{p}}}$ of $\mathbb{F}_{p}$ that is a splitting field for $H$, implying that $\mathbb{F}_{p^{a_{p}}}H$ is a minimal ECD group algebra (by Corollary \ref{nmax}, part $3$). For instance,  since $exp(H)=12\mid 5^{2}-1 $,  $\mathbb{F}_{5^{2}}$ contains a primitive $exp(H)$-th root of unity. Thus $\mathbb{F}_{5^{2}}$ is an splitting field for $H$  (by \cite[Corollary 24.11]{larry}), but  $\mathbb{F}_{5}$ is not splitting field for $H$  (by Corollary \ref{ceq-split}). Finally, if $p$ is a prime greater than $19$, then any proper ideal of $\mathbb{F}_{p}H$ is (trivially) ECD because $|H|-1=23\leq p$.\\

Consider the Mathieu group $M_{12}$. Since $M_{12}$ is a simple group, $\gamma=|M_{12}|- |\frac{M_{12}}{M_{12}^{'}}|= |M_{12}|- 1= 95039$. In addition, it is well-known that this group contains $15$ conjugacy classes, so that the number of non-commutative simple components is $s=14$, over any splitting field. Thus if $p=char(\F)\nmid |M_{12}|=95040=2^{6}\times 3^{3} \times 5\times 11$ and $\mathbb{F}_{q}$ is an splitting field for  $M_{12}$, then $ \lceil \sqrt{\frac{\gamma}{s}} \rceil = 83 \leq max\{n_{j}\}_{j=1}^{s} \leq  \lfloor \sqrt{\gamma} \rfloor= 308$ (by Corollary \ref{nmax}, part $3$), and so if $p\in [308, |M_{12}|-1] $ (these are $9098$ primes), then $\mathbb{F}_{q} G$ is a minimal ECD group algebra (by Corollary \ref{cnabelian}, part $3$). On the contrary, if $p\in [2, 79]-\{2,3,5,11\}$ (these are $22$ primes), then  $\mathbb{F}_{q} G$ is not a minimal ECD group algebra (by Corollary \ref{cnabelian}, part $3$). 
\end{ex}

If $F$ is a field and $H$ is an finite group such that $p\mid |H|$, it is said that $F H$ has finite representation type (FRT) if there exist only a finite number of non-isomorphic indecomposable $FH$-modules \cite[p.  431]{rep3}.

\begin{teo}\label{nsemi}
Let $H$ be a finite group with $p\mid |H|$. If $\F H$ has a principal indecomposable module $I$ with dimension less than or equal to $p$, then $dim_{\F}(I)=p$, any Sylow $p$-subgroup of $H$ is isomorphic to $C_{p}$ and $\F H$ has FRT. In particular, if $\F H$ is an ECID group algebra this holds.
\end{teo}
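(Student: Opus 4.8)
The plan is to identify the principal indecomposable modules with the indecomposable projective left $\F H$-modules and to pin down their dimensions by restriction to a Sylow $p$-subgroup. First I would fix a Sylow $p$-subgroup $P$ of $H$, say $|P|=p^{a}$ with $a\geq 1$ (possible since $p\mid |H|$), and record two classical facts. On one hand, choosing representatives of the right cosets $Px_{i}$ of $P$ in $H$ exhibits $\F H$ as a free left $\F P$-module of rank $[H:P]$, via the left $\F P$-module isomorphisms $\F P\cong \mathrm{span}(Px_{i})$. On the other hand, since $P$ is a $p$-group and $char(\F)=p$, the algebra $\F P$ is local (its Jacobson radical is the augmentation ideal, with one-dimensional quotient $\F$), so every projective $\F P$-module is free.

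The central step is a divisibility constraint. A principal indecomposable module $I=\F H e$ (with $e$ a primitive idempotent) is a direct summand of $\F H$, hence projective over $\F H$; restricting to $\F P$, it becomes a direct summand of a free $\F P$-module, hence projective and therefore free over $\F P$. Consequently $p^{a}=dim_{\F}(\F P)$ divides $dim_{\F}(I)$. Since $I\neq 0$ gives $dim_{\F}(I)\geq 1$, the divisibility forces $dim_{\F}(I)\geq p^{a}$, while the hypothesis gives $dim_{\F}(I)\leq p\leq p^{a}$. Hence $p^{a}=dim_{\F}(I)=p$, so $a=1$: that is, $dim_{\F}(I)=p$ and $|P|=p$. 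As every group of order $p$ is cyclic, $P\cong C_{p}$, and the same holds for every Sylow $p$-subgroup by conjugacy.

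For the finite representation type claim I would appeal to Higman's theorem, which characterises FRT of $\F H$ by the cyclicity of the Sylow $p$-subgroups of $H$; since $P\cong C_{p}$ is cyclic, $\F H$ has FRT. The concluding ``in particular'' statement is then immediate: if $\F H$ is ECID, then by definition every principal indecomposable module is ECD and hence has dimension at most $p$, so the hypothesis of the theorem is satisfied by each of them and all three conclusions apply.

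I do not anticipate a genuine obstacle, as each ingredient is standard; the points requiring care are the squeeze $p^{a}\mid dim_{\F}(I)\leq p$ with $a\geq 1$ (which simultaneously yields $a=1$ and $dim_{\F}(I)=p$), and invoking Higman's theorem in its exact biconditional form rather than a one-directional weakening.
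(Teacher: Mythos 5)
Your proof is correct and follows essentially the same route as the paper: the paper obtains the divisibility $|H|_{p}\mid \dim_{\F}(I)$ by citing \cite[Corollary 7.16, Chap.\ VII]{fin-gp}, performs the same squeeze against the hypothesis $\dim_{\F}(I)\leq p$ to get $|H|_{p}=\dim_{\F}(I)=p$, and invokes Higman's theorem \cite[Theorem 64.1]{rep3} for finite representation type, exactly as you do. The only difference is that you prove the divisibility ingredient yourself (restriction to a Sylow $p$-subgroup, locality of $\F P$, hence freeness of the restricted projective module), where the paper simply cites it; this makes your write-up more self-contained but does not change the argument.
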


\begin{proof}

Let $I$ be a principal indecomposable module. Let $|H|_{p}$ denote the highest power of $p$ dividing $|H|$. Then, by \cite[Corollary 7.16, Chap. VII]{fin-gp}, $1\neq|H|_{p}\mid dim_{\F}(I)\leq p$, impliying that $|H|_{p}=dim_{\F}(I)=p$. Hence any Sylow $p$-subgroup of $H$ is isomorphic to $C_{p}$ and so $\F H$ has FRT (by \cite[Theorem 64.1]{rep3}). The rest is clear.
\end{proof}
A consequence of Theorem \ref{nsemi} is that any non-semisimple group algebra that is ECID, has FRT and the Sylow $p$-subgroups of its underlying group are isomorphic to $C_{p}$. The converse of this statement is not true in general (see Example \ref{non-semi-ECID}).\\
%\begin{ex}
%Let $C_{6}$ be the cyclic group of order $6$ with generator $x$, $A=\gf{2}C_{6}\cong \gf{2}(C_{2}\times C_{3})$ and $B=\gf{3}C_{6}\cong \gf{3}(C_{2}\times C_{3})$.  By a forward computation using SageMath one can see that the primitive idempotents of $A$ ($B$) are $e_{1}=1+x^{2}+x^{4}$ and $e_{2}=x^{2}+x^{4}$ ($f_{1}=2+2x^{3}$ and $f_{2}=1+2x^{3}$). Thus the principal indecomposable $A$-modules ($B$-modules) have dimensions $dim\left(Ae_{1}\right)=2\leq 2=char(\gf{2})$ and $dim\left(Ae_{2}\right)=4\nleq 2=char(\gf{2})$ ($dim\left(Bf_{1}\right)=dim\left(Bf_{2}\right)=3\leq 3=char(\gf{3})$). Hence $A$ is not an ECID group algebra ($B$ is an ECID group algebra).
%\end{ex}          \begin{remark}
%Before presenting Theorem \ref{heavy}, which gives equivalent conditions to being an ECID non-semisimple group algebra, it will be useful to introduce some notation and remember some basic results.

Let $A$ be a finite-dimensional algebra over a field. Let $\overline{A}:=A/J(A)$ and $\overline{x}:=x+ J(A)$ for all $x\in A$ (where $J(A)$ denotes the Jacobson radical of $A$). Then $\overline{A}$ is semisimple (by \cite[Proposition 5.19]{rep4}) and if $f \in  \overline{A}$  is a primitive idempotent, there exists a primitive idempotent  $e\in A$ such that $\overline{e}=f$ (by \cite[Theorem 4.9]{nagao}). In particular, if $H\neq 1$ is a finite group with $p\mid |H|$ and $A=\F H$, then $\overline{A}$ has a decomposition as the mentioned in Remark \ref{artin}. % (this fact will be taken into consideration in Theorem \ref{heavy}).  
%(by \cite[Theorem 10.3, part $a$]{fin-gp}).
%\end{remark}

\begin{teo}\label{heavy}
Let $H\neq 1$ be a finite group with $p\mid |H|$. Let $A=\F H$ and $\overline{A}=A/J(A)$. Let $\overline{A}=\oplus_{j=1}^{n}B_{j}$ be the decomposition into simple components of $\overline{A}$ %ver teorema 55.2 de \cite{rep3}
where  $ B_{j}\cong M_{n_{j}}(F_{j})$ (as $\F$-algebra). Let $f_{j}\in B_{j}$ and $e_{j}\in A$ be primitive idempotents such that $\overline{e_{j}}=f_{j}$ for all $j$. Then the following statements are equivalent:

\begin{enumerate}

\item $A$ is an ECID group algebra.

\item Every primitive idempotent in $A$ generates an ideal of dimension $p$.

\item $dim_{\F}(\overline{A}f_{j})=p-dim_{\F}(J(A)e_{j})$ for all $j$.

\item $dim_{\F}(J(A)e_{j})=p-n_{j}\cdot[F_{j}:\mathbb{F}_{q}]$ for all $j$.
\end{enumerate}
\end{teo}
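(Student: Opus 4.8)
The plan is to prove the three equivalences $1 \Leftrightarrow 2$, $2 \Leftrightarrow 3$, and $3 \Leftrightarrow 4$, which together yield the full chain. The backbone of the argument is a single dimension formula relating a principal indecomposable module of $A$ to its semisimple quotient and the piece it loses to the radical. Concretely, for each primitive idempotent $e_j$ I would consider the canonical projection $\pi : A \to \overline{A} = A/J(A)$ and restrict it to $A e_j$. Its image is $\overline{A}\,\overline{e_j} = \overline{A} f_j$, and I would identify the kernel as exactly $J(A) e_j$ by verifying $J(A) \cap A e_j = J(A) e_j$: the inclusion $\supseteq$ is immediate since $J(A)$ is an ideal, and for $\subseteq$ any $x = a e_j \in J(A)$ satisfies $x = x e_j \in J(A) e_j$ by idempotency of $e_j$. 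The first isomorphism theorem then gives
\[
\dim_{\F}(A e_j) = \dim_{\F}(\overline{A} f_j) + \dim_{\F}(J(A) e_j) \quad \text{for all } j.
\]

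Next I would compute $\dim_{\F}(\overline{A} f_j)$. Since the simple components $B_k$ are orthogonal ideals and $f_j \in B_j$, we have $\overline{A} f_j = B_j f_j$, and because $f_j$ is a primitive idempotent of $B_j \cong M_{n_j}(F_j)$ it generates a minimal left ideal isomorphic to the column ideal $M_{n_j}(F_j) o_{11}$, which has $\F$-dimension $n_j \cdot [F_j : \F]$ by \cite[Theorem 3.4.9, part iv]{grouprings}, exactly as in the proof of Theorem \ref{nabelian}. Substituting into the displayed formula, statement $3$, namely $\dim_{\F}(\overline{A} f_j) = p - \dim_{\F}(J(A) e_j)$, is equivalent to $\dim_{\F}(A e_j) = p$, and after rearranging is equivalent to $\dim_{\F}(J(A) e_j) = p - n_j \cdot [F_j : \F]$, which is statement $4$. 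This settles $2 \Leftrightarrow 3 \Leftrightarrow 4$, provided I first reduce statement $2$ to checking $\dim_{\F}(A e_j) = p$ on the representatives $e_j$.

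That reduction, and the bridge to statement $1$, is where Theorem \ref{nsemi} and the classification of principal indecomposable modules enter. The principal indecomposable modules of $A$ are the $A e$ with $e$ primitive, and up to isomorphism they are represented by $A e_1, \ldots, A e_n$, one per simple component of $\overline{A}$ (using the lifting of primitive idempotents recalled just before the theorem, together with the standard fact that $A e \cong A e'$ iff $\overline{e}$ and $\overline{e'}$ lie in the same $B_j$). Since dimension is an isomorphism invariant, statement $2$ is equivalent to $\dim_{\F}(A e_j) = p$ for all $j$. For $1 \Leftrightarrow 2$: by definition $A$ is ECID iff every principal indecomposable module is ECD, i.e. has dimension at most $p$; but $p \mid |H|$, so Theorem \ref{nsemi} forces any such module of dimension $\le p$ to have dimension exactly $p$. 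Hence ``$\dim \le p$ for all principal indecomposable modules'' collapses to ``$\dim = p$ for all of them'', which is statement $2$.

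I expect the main obstacle to be organizational rather than technical: justifying that testing statement $2$ on the finite set of representatives $e_j$ is legitimate, which rests on the classification of principal indecomposable modules up to isomorphism and the invariance of $\F$-dimension. The kernel identity $J(A) \cap A e_j = J(A) e_j$ and the matrix-unit dimension count are routine, while Theorem \ref{nsemi} packages the only genuinely non-formal input, namely that dimension $\le p$ upgrades to dimension $= p$ in the non-semisimple case.
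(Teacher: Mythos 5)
Your proposal is correct and follows essentially the same route as the paper: the same dimension identity $\dim_{\F}(Ae_j)=\dim_{\F}(\overline{A}f_j)+\dim_{\F}(J(A)e_j)$ (you verify the kernel $Ae_j\cap J(A)=J(A)e_j$ by hand where the paper cites Curtis--Reiner Theorems 54.5 and 54.11, but the computation is identical), the same reduction of statement $2$ to the representatives $Ae_j$ via Corollary 54.14, the same matrix-unit count $\dim_{\F}(\overline{A}f_j)=n_j\cdot[F_j:\F]$ for $3\Leftrightarrow 4$, and the same use of Theorem \ref{nsemi} to upgrade ``$\dim\leq p$'' to ``$\dim=p$'' for $1\Leftrightarrow 2$. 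No gaps.
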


\begin{proof}

 $1\Leftrightarrow 2$ follows from \cite[Theorem 54.5]{rep3} and Theorem \ref{nsemi}.\\
 
 Note that $Ae_{j}$ is a principal indecomposable module  with unique maximal submodule $J(A)e_{j}$ for all $j$   (by \cite[Theorems 54.5 and 54.11]{rep3}). Thus, since $J(A)e_{j}=Ae_{j}\cap J(A)$ for all $j$,  $Ae_{j}/J(A)e_{j}=Ae_{j}/(Ae_{j}\cap J(A))\cong_{A} (Ae_{j}+J(A))/J(A) =\overline {A}f_{j}$ for all $j$. Hence $dim_{\F}(\overline {A}f_{j})=dim_{\F}(Ae_{j})-dim_{\F}(J(A)e_{j})$ for all $j$. \\
 
$2 \Rightarrow 3$. If $A$ is an ECID group algebra, then $dim_{\F}(Ae_{j})=p$ for all $j$ and so $dim_{\F}(\overline {A}f_{j})=p-dim_{\F}(J(A)e_{j})$ for all $j$. $3 \Rightarrow 2$. If $dim_{\F}(\overline{A}f_{j})=p-dim_{\F}(J(A)e_{j})$, then  $dim_{\F}(Ae_{j})=p$ for all $j$. Thus, as $\{Ae_{j}/J(A)e_{j}\}_{j=1}^{n}$ is a set of representatives of isomorphism classes of simple $A$-modules,  $\{Ae_{j}\}_{j=1}^{n}$ is a set of representatives of isomorphism classes of principal indecomposable $A$-modules (by \cite[Corollary 54.14]{rep3}). Hence any principal indecomposable module of $A$ has dimension equal to $p$.\\

Since $\overline{A}f_{j}$ is a minimal ideal of $B_{j}\cong M_{n_{j}}(F_{j})$, then  $dim_{\F}(\overline{A}f_{j})=n_{j}\cdot[F_{j}:\mathbb{F}_{q}]$ for all $j$. Thus, the equivalence between $3$ and $4$ is clear.
\end{proof}

%Corollary \ref{c-heavy} assumes the same conditions and notation from Theorem \ref{heavy}.

%\begin{cor} \label{c-heavy}  The following statements hold:

%\begin{enumerate}

%\item If $A$ is commutative, then $A$ is an ECID group algebra iff $dim_{\F}(J(A)e_{j})=p-[F_{j}:\mathbb{F}_{q}]$ for all $j$.

%\item If $\F$ is a splitting field for  $A$, then $A$ is an ECID group algebra iff $dim_{\F}(J(A)e_{j})=p-n_{j}$ for all $j$.

%\item If $A$ is commutative and $\F$ is a splitting field for  $A$, then $A$ is an ECID group algebra iff $dim_{\F}(J(A)e_{j})=p-1$ for all $j$.
%\end{enumerate}

%\end{cor}

%\begin{proof}

%\begin{enumerate}

%\item If $A$ is commutative, then $B_{j}\cong  F_{j}$ for all $j$, i.e., $n_{j}=1$ for all $j$. Thus condition $3$ in Theorem \ref{heavy} becomes $dim_{\F}(J(A)e_{j})=p-[F_{j}:\F]$ for all $j$.

%\item If $\F$ is a splitting field for  $A$, then $B_{j}\cong M_{n_{j}}(\F)$, i.e.,   $F_{j}=\F$ for all $j$. Thus condition $3$ in Theorem \ref{heavy} becomes $dim_{\F}(J(A)e_{j})=p-n_{j}$ for all $j$.% VER HUPPERT pag 10 que se puede probar con  el argumento que los $F_{i}'s$ son los opuestos de los endomorfismos de los A' m�dulos simples, pero estos �ltimos son A-m�dulos isomorfos a las cabezas de los indecomponibles principales (por Huppert Theorema 10.3 parte c) y esas cabezas son isomorfas (al menos como espacios vectoriales) a los A m�dulos simples (por AKO pag 230 Proposition 1.8). Por lo tanto, [F_{j}:\F]=[End_{A}(S):\F]=1 para toso S simple y todo j.

%\item It follows from part $1$ and $2$.

%\end{enumerate}

%\end{proof}

\begin{ex}\label{non-semi-ECID}
Let $H=\langle u,v : u^3=v^2=(uv)^3=1 \rangle=\{1,u,u^2v,v,u^2vu,\\* u^2,vu,uv, uvu, vuv, vu^2, uvu^2\}$ be the alternating group of degree $4$. Since $|H|=2^{2}\cdot 3$, the Sylow $3$-subgroups of $H$ are isomorphic to $C_{3}$ so that $\gf{3}H$ satisfies the necessary condition stated in Theorem \ref{nsemi} for being a ECID group algebra. In SageMath one can compute the matrices of the left regular representations of any element in $\gf{3}H$  with respect to the basis $H$ (with the order given above) and verify that there are $118$ idempotents $e$ such that $dim_{\gf{3}}(\gf{3}He)=3$. The coordinate vectors of these idempotents with respect to $H$ (with the order given above) are:
{\footnotesize \begin{eqnarray*}
112201020000, 101022120000, 111111111000, 121200102000, 110021202000, 102200021100,\\  100022021100, 112021000200, 101202100200, 120200012010, 100020212010, 111111100110,\\
111110011110, 100111111110, 102020010210, 100202010210, 121202112210, 112021212210,\\
121020100020, 110201200020, 120020001120,
100200201120, 121022121120, 112201221120,\\
100110000001, 100220000001, 120220120101,
110220220101, 111011011101, 101101111101,\\
120220110201, 110220210201, 112221120201, 
111222120201, 111010111011, 110101111011,\\
102221002011, 101222002011, 111101010111,
101011110111, 111100101111, 110011101111,\\
102222021111, 120220212111, 121222122111,
112221222111, 102222011211, 122221112211,\\
111222212211, 102221001021, 101222001021,
121221102021, 111221202021, 120220211121,\\
122221121121, 111222221121, 121222111221,
112221211221, 100210000002, 100120000002,\\
110001022002, 101000122002, 120210210102, 
120120210102, 120120120102, 110210220102,\\
122000001102, 100002201102, 122210121102,
110122221102, 121210122102, 110121222102,\\
120210110202, 110120210202, 110210120202,
110120120202, 122000010012, 100002210012,\\
102212001012, 102122001012, 102121002012,
101212002012, 122211012012, 101122212012,\\
112211022012, 101122122012, 122002211112,
122122121112, 112212221112, 122121212112,\\
121212212112, 122120110212, 110212210212, 
122212111212, 112122211212, 112121122212,\\
111212122212, 102211001022, 101122001022, 
101211002022, 101121002022, 122121001122,\\
101212201122, 122211211122, 121122211122,
121121122122, 111211222122, 110001000222,\\
101000100222, 121120110222, 110211210222,
112121001222, 101212101222, 112211121222,\\
111122121222, 121211112222, 111121212222,
111001122222.\hspace{4.6cm}
\end{eqnarray*}}

Since $3$ is the minimum possible dimension that an ideal generated by an idempotent might have (by \cite[Corollary 7.16, Chap. VII]{fin-gp}), these idempotents are primitive.  In addition, by using SageMath it can be seen that any other idempotent (distinct from $1$) is the sum of two or three of these $118$ idempotents, and so it is not primitive. Therefore $\gf{3}H$ is an ECID group algebra (by Theorem \ref{heavy}).\\

If $C_6$ is the cyclic group of order $6$ with generator $x$. Since $|C_{6}|=2\cdot 3$, the Sylow $2$-subgroup of $C_{6}$ is isomorphic to $C_{2}$ so that $\gf{2}C_{6}$ satisfies the necessary condition stated in Theorem \ref{nsemi} for being a ECID group algebra. However, by using SageMath, one can see that $\gf{2}C_6$ has only two primitive idempotents, which are $e_1 = 1 + x^2 + x^4$ and $e_2 = x^2 + x^4$, and that %$ dim ((\gf{2}C_6)e_1) = 2 =char(\gf{2})$ and
 $dim ((\gf{2}C_6)e_2) = 4 > 2 = char(\gf{2})$. Therefore,  $\gf{2}C_6$ is not an
ECID group algebra (by Theorem \ref{heavy}).
\end{ex}

\section{ Some lower bounds for the minimum distance of group codes}\label{s5}

In this section, some lower bounds for the minimum Hamming distance of group codes in  group algebras (including minimal ECD and ECID) and some arithmetical tests of non-primitivity of idempotents are presented.\\%\begin{remark}\label{apborello}
%If $H$ is a finite group and $C\subseteq \F H$  is a group code, then $\frac{|H|}{dim_{\F}(C)}\leq d(C)$ \cite[Corollary 2.6]{borello2}. Thus, if $dim_{\F}(C)\leq a$ for some $a\in \mathbb{Z}_{>0}$, then  $\frac{|H|}{a}\leq \frac{|H|}{dim_{\F}(C)}\leq  d(C)$. 
%\end{remark}

Let $x\in \gf{p}$. Then
\[\mathfrak{D}(x) := \begin{cases} r  &\mbox{if } r\neq 0 \\
p &\mbox{if }  r=0  \end{cases}\]
where $r$ is the least non-negative integer in $x$.

\begin{teo}\label{hd-bounds}  Let $G$ be a finite abelian group such that $p\nmid |G|$, $C\subseteq \F G$ be a minimal abelian code and $e\in \F G$ be a primitive idempotent such that $C=\F Ge$. Let $x=|G|\lambda_{1}(e)$ where $\lambda_{1}(e)$ is the coefficient of $e$ in $1$ (x is a class modulo $p$ by \cite[Lemma 3.1]{garcia-tapia}). Let $w\in G$ be an element such that $o(w)=exp(G)$ and $\{p_{i}\}_{i=1}^{s}$ be the set of prime divisors of $|G|$. Then

\begin{enumerate}

\item  \[\prod_{i=1}^{s}\dfrac{p_{i}}{p_{i}-1}\leq \dfrac{|G|}{t_{w}}\leq d(C).\]
In particular, if  $f\in \F G$ is an idempotent and $C'=\F Gf$ is  such that  $d(C')< \dfrac{|G|}{t_{w}} $, then  $f$ is not primitive. 

\item  If  

\begin{enumerate}[(a)]
\item $t_{w}\leq p$,

\item or $\exists t \in \mathbb{Z}_{>0}$ such that $exp(G)\mid q^{t}-1$ and $t\leq p$,

\item or $\phi(exp(G))\leq p$ (where $\phi$ denotes the Euler's totient function),
\end{enumerate}

 then $dim_{\F}(C)=\mathfrak{D}(x)$ and

\[\prod_{i=1}^{s}\dfrac{p_{i}}{p_{i}-1} \leq \dfrac{|G|}{\mathfrak{D}(x)}\leq d(C).\]
In particular, if $(c)$ is fulfilled,
\[\dfrac{|G|}{p} \leq \prod_{i=1}^{s}\dfrac{p_{i}}{p_{i}-1} \leq \dfrac{|G|}{\mathfrak{D}(x)}\leq d(C)\]
\end{enumerate}
\end{teo}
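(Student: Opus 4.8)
The plan is to reduce everything to two ingredients that are already available in the paper: the dimension formula for minimal ideals $t_w = \max\{\dim_\F(I)\}$ from Theorem \ref{super}, and the Borello et al. bound $|G|/\dim_\F(C) \le d(C)$ from \cite[Corollary 2.6]{borello2}. The skeleton of the argument is therefore: (i) bound $t_w$ from above by $|G|$-type quantities, (ii) feed that into the Borello bound, and (iii) in part 2, identify $\dim_\F(C)$ with $\mathfrak{D}(x)$ via the ECD dimension formula so that the general bound becomes computable.

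For part 1, I would first observe that since $C = \F Ge$ is minimal (generated by a primitive idempotent, and $\F G$ is semisimple as $p \nmid |G|$), $C$ is a minimal ideal, so $\dim_\F(C) \le t_w$ by Theorem \ref{super}. The Borello bound then gives $|G|/t_w \le |G|/\dim_\F(C) \le d(C)$, which supplies the right-hand inequality. For the left-hand inequality $\prod_{i=1}^s p_i/(p_i-1) \le |G|/t_w$, I would bound $t_w$ from above: writing $w$ for an element of order $\exp(G)$, the key estimate is $t_w = |\mathcal{S}_w| \le |\mathcal{C}_w| = \phi(\exp(G))$, exactly as established inside the proof of Theorem \ref{f1} (two elements of the same $q$-orbit generate the same cyclic subgroup). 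Since $\phi(\exp(G)) \le \phi(|G|) = |G|\prod_{i=1}^s (1 - 1/p_i) = |G|\prod_{i=1}^s (p_i-1)/p_i$, rearranging yields $|G|/t_w \ge |G|/\phi(|G|) \ge \prod_{i=1}^s p_i/(p_i-1)$. The contrapositive for non-primitivity is then immediate: if $C' = \F Gf$ has $d(C') < |G|/t_w$, then $C'$ cannot be minimal (a minimal code would satisfy $|G|/t_w \le d(C')$), so $f$ is not primitive.

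For part 2, the point is that each of the three hypotheses (a), (b), (c) forces $t_w \le p$: directly in case (a); via Theorem \ref{eq-split} and Corollary \ref{c-teoprin} in case (b); and via Theorem \ref{f1} (equivalently the estimate $t_w \le \phi(\exp(G))$ just used) in case (c). Once $t_w \le p$, every minimal ideal has dimension at most $p$, so $C$ is an ECD group code, and the dimension formula \cite[Theorem 3.1]{garcia-tapia} recalled in the introduction gives $\dim_\F(C) = \mathfrak{D}(x)$ with $x = |G|\lambda_1(e)$. Substituting $\dim_\F(C) = \mathfrak{D}(x)$ into the chain from part 1 produces the displayed inequalities. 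The final sharpening under (c) comes from noting that (c) states $\phi(\exp(G)) \le p$, and chasing the estimate $t_w \le \phi(\exp(G)) \le p$ together with $\mathfrak{D}(x) = \dim_\F(C) = t_w$ (or more precisely $\mathfrak{D}(x) \le p$) gives $|G|/p \le |G|/\mathfrak{D}(x)$, which slots in at the left end.

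The step I expect to be the main obstacle is the left-hand inequality $\prod_{i=1}^s p_i/(p_i-1) \le |G|/t_w$, specifically making the passage from $\phi(\exp(G))$ to the product over the distinct prime divisors of $|G|$ airtight. The subtlety is that $\exp(G)$ and $|G|$ have the \emph{same} set of prime divisors $\{p_i\}$ but different exponents, so one must use the multiplicative form $\phi(m) = m\prod_{p \mid m}(1-1/p)$ — where the product ranges only over distinct primes — for $m = \exp(G)$ and then compare with the analogous expression for $|G|$; the inequality $t_w \le \phi(\exp(G)) \le |G|\prod_i (p_i-1)/p_i$ relies on $\exp(G) \le |G|$ together with the fact that both share the prime set $\{p_i\}$, so the $\prod(1-1/p_i)$ factor is identical and only the leading factor changes. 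Handling this cleanly, rather than the two routine appeals to the Borello bound and the ECD dimension formula, is where care is needed.
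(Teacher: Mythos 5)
Your proposal follows essentially the same route as the paper's proof: $\dim_{\F}(C)\le t_{w}$ via Theorem \ref{super}, the estimate $t_{w}=|\mathcal{S}_{w}|\le|\mathcal{C}_{w}|=\phi(exp(G))$, the bound $|G|/\dim_{\F}(C)\le d(C)$ of \cite[Corollary 2.6]{borello2}, and, in part 2, the reduction of each of (a), (b), (c) to the minimal ECD property so that $\dim_{\F}(C)=\mathfrak{D}(x)$ by \cite[Theorem 3.1, part 3]{garcia-tapia}. In fact, your treatment of the left-hand inequality is more careful than the paper's: the paper asserts the equality $|G|/\phi(exp(G))=\prod_{i=1}^{s}p_{i}/(p_{i}-1)$, which holds only when $G$ is cyclic, since in general $|G|/\phi(exp(G))=\bigl(|G|/exp(G)\bigr)\prod_{i=1}^{s}p_{i}/(p_{i}-1)$; your chain $t_{w}\le\phi(exp(G))\le\phi(|G|)=|G|\prod_{i=1}^{s}(p_{i}-1)/p_{i}$, which uses that $exp(G)$ and $|G|$ have the same prime divisors, is airtight and delivers exactly the inequality claimed in parts 1 and 2.

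The genuine gap is in the final display under hypothesis (c). You argue that $\mathfrak{D}(x)\le p$ gives $|G|/p\le|G|/\mathfrak{D}(x)$, ``which slots in at the left end''; but the leftmost link of the displayed chain is $|G|/p\le\prod_{i=1}^{s}p_{i}/(p_{i}-1)$, and your argument never establishes it. Nor can it: that inequality is equivalent to $\phi(|G|)\le p$, whereas (c) only supplies $\phi(exp(G))\le p$. Concretely, take $G=C_{3}\times C_{3}$ and $q=p=2$: hypothesis (c) holds because $\phi(3)=2\le 2$, yet $|G|/p=9/2>3/2=\prod_{i=1}^{s}p_{i}/(p_{i}-1)$. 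So that link is false for non-cyclic $G$; what is true, and what your argument actually proves (as does, in substance, the paper's, which at this step leans on the same erroneous equality noted above), is $|G|/p\le|G|/\mathfrak{D}(x)\le d(C)$ together with $\prod_{i=1}^{s}p_{i}/(p_{i}-1)\le|G|/\mathfrak{D}(x)$. To get the statement as written one must either assume $G$ cyclic (so that $exp(G)=|G|$) or drop/reorder the middle term; as it stands, your last step, like the paper's, does not go through.
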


\begin{proof}

\begin{enumerate}

\item Since  $\mathcal{S}_{w}\subseteq \mathcal{C}_{w}$, then $t_{w}=|\mathcal{S}_{w}|\leq |\mathcal{C}_{w}|=\phi(exp(G))$. Hence $dim_{\F}(C)\leq t_{w}\leq \phi(exp(G))$ (by Theorem \ref{super}), implying that
 \[\frac{|G|}{\phi(exp(G))}=\prod_{i=1}^{s}\dfrac{p_{i}}{p_{i}-1}\leq \frac{|G|}{t_{w}} \leq \frac{|G|}{dim_{\F}(C)}\leq d(C)\]
 
 where the last inequality is by  \cite[Corollary 2.6]{borello2}.
 
 \item Suppose $(a)$, then $\F G$ is a minimal ECD group algebra (by Theorem \ref{super}). Suppose $(b)$, then $\F G$ is a minimal ECD group algebra (by Theorem \ref{eq-split} and Corollary \ref{c-teoprin}). Suppose $(c)$, then $\F G$ is a minimal ECD group algebra (by Theorem \ref{f1}). Thus, if any of the conditions $(a)-(c)$ is fulfilled, $\F G$ is a minimal ECD group algebra and so $dim_{\F}(C)=\mathfrak{D}(x)$ (by \cite[Theorem 3.1, part 3]{garcia-tapia}). Hence, by part $1$,
\[\frac{|G|}{\phi(exp(G))}=\prod_{i=1}^{s}\dfrac{p_{i}}{p_{i}-1} \leq \dfrac{|G|}{\mathfrak{D}(x)}\leq d(C).\]

In particular, if $(c)$ is fulfilled, $dim_{\F}(C)=\mathfrak{D}(x)\leq t_{w}\leq \phi(exp(G))\leq  p$ so that
  \[\dfrac{|G|}{p} \leq \frac{|G|}{\phi(exp(G))}= \prod_{i=1}^{s}\dfrac{p_{i}}{p_{i}-1} \leq \dfrac{|G|}{\mathfrak{D}(x)}\leq d(C).\]
\end{enumerate}
\end{proof}

Although in some cases the bound $\dfrac{|G|}{dim_{\F}(C)}\leq d(C)$ could be reached (by \cite[Theorem 2.10]{borello2}), generally the bounds in Theorem \ref{hd-bounds} might be far from the minimum distance. Thus their importance lies in the the meager cost required to get them.

\begin{teo}\label{final}
Let $H\neq 1$ be a finite group. Let $0\neq C\subseteq \F H$ be a group code and $1\neq e\in \F H$ be an idempotent such that $C=\F He$.

\begin{enumerate}

\item Let $r$ be the least non-negative integer in the class $|H|\lambda_{1}(e)$ (which is in $\mathbb{F}_{p}$). Then there exists $k\in \left\lbrace 0,..., \left\lfloor \dfrac{|H|-(r+1)}{p} \right \rfloor  \right\rbrace$ such that
 \[dim_{\F}(C)=r+kp \, \text{ and }  \, \frac{|H|}{r+kp}\leq d(C).\]

 \item Let $\F H$ be semisimple with Wedderburn-Artin decomposition $\F H\cong \left( \oplus_{i=1}^{r} F_{i} \right) \oplus \left( \oplus_{j=1}^{s} M_{n_{j}}(F_{j})\right)$ where $n_{j}\geq 2$ for $j=1,...,s$. Let  $w\in H/H'$ be such that $o(w)=exp(H/H')$ and $a\in \mathbb{Z}_{>0}$ be such that $max\{t_{w}, max\{n_{j}\cdot[F_{j}: \F]\}_{j=1}^{s}\}\leq a$. If
  \[d(C)<\dfrac{|H|}{a},\]
 then $e$ is not primitive. In particular, if $\F$ is an splitting field for $H$ and $d(C)<\frac{|H|}{max\{n_{j}\}_{j=1}^{s}}$, then $e$ is not primitive.
 
\item Let $\{e_{i}\}_{i=1}^{m}$ be a set of  primitive idempotents that are pairwise orthogonal with $e=\sum_{i=1}^{m}e_{i}$, $r_{i}$  be the least non-negative integer in the class $|H|\lambda_{1}(e_{i})$, $J=\{i\in\{1,...,m\}: r_{i}= 0\}$, and
 $\F H$ be an ECID group algebra. Then \[dim_{\F}(C)=\left(\sum_{i\notin J}r_{i}\right)+|J|\cdot p \, \text{ and } \,\dfrac{|H|}{\left(\sum_{i\notin J}r_{i}\right)+|J|\cdot p}\leq d(C).\]
 
\item Let $\F H$ be an ECID group algebra and $d(C)<\dfrac{|H|}{p}$. Then $e$ is not primitive.
\end{enumerate} 
\end{teo}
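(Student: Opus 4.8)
The plan is to derive this as a direct consequence of the structural rigidity of ECID group algebras established in the preceding results, namely that in such an algebra every principal indecomposable module—equivalently, every ideal generated by a primitive idempotent—has dimension exactly $p$. This is precisely the content of the equivalence $1 \Leftrightarrow 2$ in Theorem \ref{heavy}: if $\F H$ is ECID then every primitive idempotent generates an ideal of dimension $p$. (If $\F H$ happens to be semisimple, the same conclusion holds with $t_w \le p$ and $\max\{n_j \cdot [F_j:\F]\} \le p$ forcing all minimal ideals to have dimension $p$ exactly when they are the principal indecomposables of an ECID algebra; but the cleanest route treats the statement uniformly via Theorem \ref{heavy}.)

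First I would argue by contraposition. Suppose $e$ is primitive. Then $C = \F H e$ is a principal indecomposable module, so by the ECID hypothesis together with Theorem \ref{heavy} (specifically $1 \Rightarrow 2$), we have $dim_{\F}(C) = p$. Next I would invoke the minimum-distance bound \cite[Corollary 2.6]{borello2}, which asserts that for any group code $C \subseteq \F H$,
\[
\frac{|H|}{dim_{\F}(C)} \leq d(C).
\]
Substituting $dim_{\F}(C) = p$ yields $\dfrac{|H|}{p} \leq d(C)$. This directly contradicts the hypothesis $d(C) < \dfrac{|H|}{p}$. Hence $e$ cannot be primitive, which is exactly the claim.

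The contrapositive packaging is what makes the argument short: the hypothesis $d(C) < |H|/p$ is incompatible with $e$ being primitive, because primitivity would pin the dimension to $p$ and thereby force $d(C) \ge |H|/p$. I expect the only point requiring care is confirming that the ECID hypothesis applies without assuming $\F H$ is semisimple—Theorem \ref{heavy} is stated for the non-semisimple case $p \mid |H|$, while the semisimple case is covered by the minimal-ECD machinery (Theorem \ref{super} and its corollaries), where again every principal indecomposable is a minimal ideal of dimension at most, and here exactly, $p$. So the main (minor) obstacle is to phrase the appeal to $dim_{\F}(C) = p$ so that it is valid in both the semisimple and non-semisimple regimes; in either case the ECID property delivers the uniform dimension $p$ for ideals generated by primitive idempotents, after which \cite[Corollary 2.6]{borello2} closes the argument immediately.
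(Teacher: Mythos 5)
Your proposal is correct and takes essentially the same route as the paper: assume $e$ is primitive, use Theorem \ref{heavy} (ECID $\Leftrightarrow$ every primitive idempotent generates an ideal of dimension $p$) to get $\dim_{\F}(C)=p$, then apply \cite[Corollary 2.6]{borello2} to obtain $|H|/p\leq d(C)$, contradicting the hypothesis. One correction to your side remark, though: in the semisimple regime ($p\nmid|H|$, where Theorem \ref{heavy} does not apply) the ECID property gives only $\dim_{\F}(C)\leq p$ for a primitive idempotent, not equality --- for instance the ideal generated by $\frac{1}{|H|}\sum_{h\in H}h$ has dimension $1$ --- but this weaker bound still yields $|H|/p\leq |H|/\dim_{\F}(C)\leq d(C)$, so the conclusion is unaffected; note the paper's own proof glosses over the same distinction by citing Theorem \ref{heavy} unconditionally.
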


\begin{proof}

\begin{enumerate}

\item By \cite[Theorem 3.1, part 3]{garcia-tapia}, $dim_{\F}(C)$ is congruent with $r$ modulo $p$, i.e., there exists $k\in \mathbb{Z}_{\geq 0}$ such that $dim_{\F}(C)=r+kp$. On the other hand, since  $C\subsetneq \F H$, $dim_{\F}(C)=r+kp \leq |H|-1$, i.e., $k\leq \frac{|H|-(r+1)}{p}$. Therefore $k\in \left\lbrace 0,..., \left\lfloor \frac{|H|-(r+1)}{p} \right \rfloor\right\rbrace$. The rest follows from \cite[Corollary 2.6]{borello2}.

\item Suppose  $e$ is a primitive idempotent, then $dim_{\F}(C)\leq max\{t_{w}, max\{n_{j}\cdot[F_{j}: \F]\}_{j=1}^{s}\}\leq a$ (by Theorem \ref{nabelian} parts $1$ and $2$). Thus, by \cite[Corollary 2.6]{borello2}, $\dfrac{|H|}{a}\leq d(C)$. The rest follows from the fact that if $\F$ is an splitting field for $G$, $max\{t_{w}, max\{n_{j}\cdot[F_{j}: \F]\}_{j=1}^{s}\}=max\{n_{j}\}_{j=1}^{s}$.

\item Since $e=\sum_{i=1}^{m}e_{i}$, $C=\oplus_{i=1}^{m}\F He_{i}$ and so \[dim_{\F}(C)=\sum_{i=1}^{m}dim_{\F}(\F He_{i})=\sum_{i=1}^{m}\mathfrak{D}(|H|\lambda_{1}(e_{i}))= \left(\sum_{i\notin J}r_{i} \right)+|J|\cdot p\]  where the second equality is because $\F H$ is an ECID group algebra. The rest follows from \cite[Corollary 2.6]{borello2}.

\item Suppose $e$ is primitive, then $dim_{\F}(C)=p$ (by Theorem \ref{heavy}) and so $\dfrac{|H|}{p}\leq d(C)$ (by \cite[Corollary 2.6]{borello2}).
\end{enumerate}
\end{proof}
Since every idempotent is the sum of primitive orthogonal ones, the characterizations of finite ECID  group algebras given Section \ref{s4} may be used to construct these algebras, and to compute the dimension and some lower bounds for the minimum distance of group codes generated by (non-trivial) idempotents in them (via Theorem \ref{final}, part $3$).
\begin{ex}\label{exfinal}

 Note that $SL(2,3)$ is isomorphic to the subgroup of $S_{8}$ obtained in SageMath as $``SL(2,3).as\_permutation\_group(\,)"$ given by
\begin{eqnarray*}
H:=\langle (235)(678), (1247)(3685) \rangle= \{1_{H},(1247)(3685), (235)(678), (253)(687),\\
                                           (14)(27)(38)(56), (137)(248), (126)(475), (14)(285736), (1742)(3586),\\
                                           (123478)(56), (152467)(38), (168453)(27), (135486)(27), (187432)(56),\\
                                            (176425)(38), (1546)(2873), (14)(263758), (162)(457), (173)(284),\\
                                            (1348)(2576), (1843)(2675), (185)(364), (158)(346),(1645)(2378)\}.
\end{eqnarray*}
By Example \ref{exnabelian},  $\mathbb{F}_{25}$ is an splitting field for $H$ and $|H/H'|=3$. As $H$ has $7$-conjugacy classes,  $\mathbb{F}_{25} H\cong ( \mathbb{F}_{25} \oplus \mathbb{F}_{25}\oplus \mathbb{F}_{25}) \oplus \left( \oplus_{j=1}^{4} M_{n_{j}}(\mathbb{F}_{25})\right)$. An easy arithmetical computation leads to that $n_{1}=n_{2}=n_{3}=2$ and $n_{4}=3$, i.e., \[\mathbb{F}_{25} H\cong  \mathbb{F}_{25} \oplus \mathbb{F}_{25}\oplus \mathbb{F}_{25}  \oplus  M_{2}(\mathbb{F}_{25})\oplus M_{2}(\mathbb{F}_{25})\oplus M_{2}(\mathbb{F}_{25}) \oplus M_{3}(\mathbb{F}_{25}).\]
Thus $\mathbb{F}_{25} H$ is a minimal ECD group algebra (by Corollary \ref{cnabelian}, part $3$). Note that, the idempotent $e:=1-\widehat{H'}$  with $\widehat{H'}:=\frac{1}{|H'|}\left(\sum_{h\in H'}h\right)$ is such that $\mathbb{F}_{25} He$ is the sum of the non-commutative simple components of $\mathbb{F}_{25} H$ (by \cite[Proposition 3.6.11]{grouprings}), i.e., $\mathbb{F}_{25} He\cong  M_{2}(\mathbb{F}_{25})\oplus M_{2}(\mathbb{F}_{25})\oplus M_{2}(\mathbb{F}_{25}) \oplus M_{3}(\mathbb{F}_{25}) $.  So that the minimal ideals contained in $\mathbb{F}_{25} He$ are exactly those with dimension equal to $2$ or $3$.\\

 Let $\gamma$ be a root of the polynomial $p(x) = x^2+4x+
2 \in \mathbb{F}_{5}[x]$ in some extension field of $\mathbb{F}_{5}$. Since $p(x)$ is irreducible $\mathbb{F}_{25}= \mathbb{F}_{5}(\gamma)$. Let $e_{1}=\frac{1}{|H|}\left(\sum_{h\in H}h\right)$, and $e_{2}$ and $e_{3}$ the elements with coordinate vectors  (with the order given above over $H$) $(3, \gamma + 1, 3, 3, 2, 3\gamma + 4, \gamma + 1, 2, 4\gamma + 4, \gamma + 1, 4\gamma + 1, \gamma + 4, 3\gamma + 4, 2\gamma + 1, 4\gamma + 4, 
                 4\gamma + 1, 2, \gamma + 4, 4\gamma + 4, 3\gamma + 4, 2\gamma + 1, 2\gamma + 1, 4\gamma + 1, \gamma + 4)$ and $(2, 1, 2, 2, 2, 1, 1, 2, 1, 1, 1, 1, 1, 1, 1, 1, 2, 1, 1, 1, 1, 1, 1, 1)$, respectively.  By computing the matrices of the left regular representations of $e_{2}$ and $e_{3}$ in SageMath one can check that these are generating idempotents of ideals of dimensions $2$ and $3$ (see Table \ref{tab2}), respectively. In addition, since the product of these idempotents with $\widehat{H'}=1-e$ is zero, they are contained in $\mathbb{F}_{25} He$ and so must be primitive (because they have the adequate dimensions). Thus the information contained in Tables \ref{tab1} and \ref{tab2} is obtained:

\begin{table}[h]
\begin{center}
\begin{tabular}{|c|c|c|c|}
\hline 
$i$ &  $\lambda_{1}(e_{i})$ & $|H|\lambda_{1}(e_{i})$&  $\mathfrak{D}(|H|\lambda_{1}(e_{i}))$ \\
\hline  \hline
$1$ & $4$ & $24\cdot 4=96$&  \textcolor{red}{$1$}\\
\hline
$2$ & $3$ & $24\cdot 3=72$ &  \textcolor{red}{$2$}\\
			\hline
$3$ & $2$ & $24\cdot 2=48$ &  \textcolor{red}{$3$}\\
\hline
\end{tabular}
\end{center}
\caption{}\label{tab1}
\end{table}

%Since $\mathbb{F}_{25} H$ is an ECD group algebra and $e_{i}$ is primitive for all $i$, then $\mathbb{F}_{25} He_{i}$ is an ECD group code for all $i$ and so $dim_{\mathbb{F}_{25}}(\mathbb{F}_{25} He_{i}) = \mathfrak{D}(|H|\lambda_{1}(e_{i}))$ (by \cite[Theorem 3.1, part 3]{garcia-tapia}).

\begin{table}[h]
\begin{center}
\begin{tabular}{|c|c|c|c|}
\hline 
$I$ & $dim(\oplus_{i\in I}\mathbb{F}_{25} He_{i})$ & $|H|/dim(\oplus_{i\in I}\mathbb{F}_{25} He_{i})$ & $d(\oplus_{i\in I}\mathbb{F}_{25} He_{i})$  \\
\hline  
$\{1\}$ & \textcolor{red}{$1$} & $24$& $24$\\
\hline
$\{2\}$ & \textcolor{red}{$2$} & $12$ & $18$\\
\hline
$\{3\}$ & \textcolor{red}{$3$} & $8$ & $12$ \\
\hline
$\{1,2\}$ & $3$ & $8$& $15$\\
\hline
$\{1,3\}$ & $4$ & $6$ & $6$ \\
\hline
$\{2,3\}$ & $5$ & $8$ & $9$ \\
\hline
$\{1,2,3\}$ & $6$ & $4$ & $6$ \\
\hline
\end{tabular}
\end{center}
\caption{The first three entries in the second column were obtained applying \cite[Lemma 2.3]{garcia-tapia}. These coincide with the elements in the last column of Table \ref{tab1} because $\mathbb{F}_{25} H$ is a minimal ECD group algebra. All the remaining dimensions can be easily computed as the sums of these, and one gets the lower bounds for the minimum distances that appear in the third column. This illustrates Theorem \ref{final} (part $3$).}\label{tab2}
\end{table}
\end{ex}

\section*{Conclusion}
In this work, ECID group algebras, and group codes in such algebras are studied. These results allow us to determine relations for the parameters of group codes generated by non-trivial idempotents. Moreover, certain criteria are presented to ascertain whether an idempotent in a group algebra is not primitive.\\

Since any group code (ideal) generated by an idempotent is a direct sum of principal indecomposable ones,  in an ECID group algebras the dimension of such ideals (which are all, in the semisimple case) can be easily computed as the sum of the dimension of the indecomposable ones, which provides lower bounds for their minimum distances, as was shown on Example \ref{exfinal}.\\ % (when the generating primitive idempotents involved in the sum are known). \\ % todo idempotente es suma de idempotentes primitivos ortogonales por pares por Proposition 1.2. del AKO

The theory presented herein may be of use to widen the understanding of the dimension of ideals in group algebras and the minimum Hamming distance of group codes or to construct ECID group algebras, and group codes with desired parameters. % These are problems of interest in areas of both pure mathematics (e.g., representation theory) and applied mathematics (e.g., coding theory).

\end{document}